\newtheorem{theorem}{Theorem}[section]
\newtheorem{lemma}[theorem]{Lemma}
\newtheorem{corollary}[theorem]{Corollary}
\theoremstyle{definition}
\newtheorem{definition}[theorem]{Definition}
\theoremstyle{remark}
\newtheorem{remark}[theorem]{Remark}
\numberwithin{equation}{section}
\DeclareMathOperator{\Hess}{Hess}
\DeclareMathOperator{\Ric}{Ric}
\DeclareMathOperator{\dist}{dist}
\begin{document}

\title[Gradient estimates for $f$-heat equations]{Gradient estimates for some $f$-heat equations driven by Lichnerowicz's equation on complete smooth metric measure spaces}

\def\cfac#1{\ifmmode\setbox7\hbox{$\accent"5E#1$}\else\setbox7\hbox{\accent"5E#1}\penalty 10000\relax\fi\raise 1\ht7\hbox{\lower1.0ex\hbox to 1\wd7{\hss\accent"13\hss}}\penalty 10000\hskip-1\wd7\penalty 10000\box7 }

\author[N.T. Dung]{Nguyen Thac Dung}
\address[N.T. Dung]{Department of mathematics, College of Science\\ Vi\^{e}t Nam National University, Ha N\^{o}i, Vi\^{e}t Nam}
\email{\href{mailto: N.T. Dung <dungmath@gmail.com>}{dungmath@gmail.com}}

\author[N.N. Khanh]{Nguyen Ngoc Khanh}
\address[N.N. Khanh]{Department of mathematics, College of Science\\ Vi\^{e}t Nam National University, Ha N\^{o}i, Vi\^{e}t Nam}
\email{\href{mailto: N.N. Khanh <khanh.mimhus@gmail.com>}{khanh.mimhus@gmail.com}}

\author[Q.A. Ng\^o]{Qu\cfac oc Anh Ng\^o}
\address[Q.A. Ng\^{o}]{Department of mathematics, College of Science\\ Vi\^{e}t Nam National University, Ha N\^{o}i, Vi\^{e}t Nam}
\email{\href{mailto: Q. A. Ngo <bookworm\_vn@yahoo.com>}{bookworm\_vn@yahoo.com}}

\begin{abstract}
Given a complete, smooth metric measure space $(M,g,e^{-f}dv)$ with the Bakry--\'{E}mery Ricci curvature bounded from below, various gradient estimates for solutions of the following general $f$-heat equations 
$$
u_t=\Delta_f u+au\log u+bu +Au^p+Bu^{-q}
$$
and
\[
u_t=\Delta_f u+Ae^{pu}+Be^{-pu}+D
\]
are studied. As by-product, we obtain some Liouville-type theorems and Harnack-type inequalities for positive solutions of several nonlinear equations including the Schr\"{o}dinger equation, the Yamabe equation, and Lichnerowicz-type equations as special cases. 
\end{abstract}


\keywords{Gradient estimates, Bakry-\'{E}mery curvature, Complete smooth metric measure space, Harnack-type inequalities, Liouville-type theorems, Lichnerowicz's equation}

\maketitle


\section{Introduction}

The motivation of the present work traces back to the seminal paper \cite{LY} by Li and Yau thirty years ago. In that elegant paper, the authors introduced a global version of a Harnack-type inequality for positive solutions of the following parabolic equation 
\begin{equation}\label{eqSchrodinger}
\Big(\Delta-q(x, t)-\frac{\partial}{\partial t}\Big)u(x,t)=0
\end{equation}
on complete Riemannian manifolds, where the potential $q(x, t)$ is assumed to be $C^2$ in the first variable and $C^1$ in the second variable. The main ingredient in their proof are gradient estimates. Such a Harnack-type inequality is of importance because it allows the authors to obtain an upper estimate for the heat kernel of \eqref{eqSchrodinger} if the underlying manifolds have non-negative Ricci curvature; in addition, such an upper bound is sharp. Interestingly, the Harnack inequality they obtained also exhibits a physical phenomenon in that the temperature at a given point in spacetime is controlled from the above by the temperature at a later time. 

In 1993, Hamilton \cite{H} proved a different version of gradient estimates for heat equations
\begin{equation}\label{eqHeatEquation}
\frac{\partial }{\partial t} u(x,t)=\Delta u(x,t)
\end{equation}
on compact Riemannian manifolds. Moreover, such a gradient estimate allows the author to bound $\Delta u$ from above. It is now known that the gradient estimates obtained by Hamilton are useful for proving monotonicity formulas; see \cite{GH}. Owning certain physical interpretation, Hamilton's gradient estimates also show that if the temperature is bounded, then one can compare the temperature of two different points at the same time. 

Inspired by the work of Hamilton for the heat equation \eqref{eqHeatEquation}, Souplet and Zhang \cite{SZ} proposed different gradient estimates and hence obtaining new Liouville-type theorems for the heat equations on non-compact manifolds. Their result seems surprising because it enables the comparison of temperature distribution instantaneously, without any lag in time, even for non-compact manifolds, regardless of the boundary behavior; see \cite{SZ} for detailed discussion.

Taking the understanding of the Ricci flow introduced by Hamilton \cite{Hal} into account, Ma \cite{Lima} investigated the following equation
\begin{equation}\label{eqMaEquation}
\Delta u+au\log u+bu=0
\end{equation}
on complete non-compact Riemannian manifolds where $a$ and $b$ are constant with $a<0$. His finding for local gradient estimates for positive solutions of \eqref{eqMaEquation} on complete non-compact Riemannian manifold is almost optimal if one considers Ricci solitons. Note that a Riemannian manifold $(M, g)$ is called \textit{gradient Ricci soliton} if there is a smooth function $f$ on $M$ and a constant $\lambda\in\mathbb{R}$ such that 
\[
\Ric+\Hess f=\lambda g.
\]
Given such a gradient Ricci soliton, if we set $u=e^f$, then by a simple computation, we can show that $u$ solves
$$\Delta u+2\lambda u\log u+(A_0+n\lambda)u=0$$
for some constant $A_0$; see \cite{Lima}. Raised by Ma, a natural question is that whether or not we have local gradient estimates for positive solutions to following evolution equation
\begin{equation}\label{eqMaParabolicEquation}
u_t=\Delta u+au\log u+bu.
\end{equation}

Ma's problem was generalized to the so-called smooth metric measure spaces. Recall that a \textit{complete smooth metric measure space} is a triple $(M,g,e^{-f}dv)$, where $dv$ is the volume element induced by the metric tensor $g$ and $e^{-f}dv$ is the weighted measure. On $(M, g, e^{-f}dv)$, the Bakry-\'{E}mery curvature $\Ric_f$ is related to the Ricci tensor by
\[
\Ric_f:= \Ric+ \Hess f,
\]
where $\Hess f$ is the Hessian matrix of $f$ with respect to the metric tensor $g$. It is obvious to see that gradient Ricci solitons are special smooth metric measure spaces. In \cite{Ru}, Ruan studied gradient estimates of Souplet--Zhang type for the evolution equation 
$$u_t=\Delta_fu+hu $$
where $h$ is a negative function defined on $M\times (0, +\infty)$ which is $C^1$ in the $x$-variable. Here we denote 
\[
\Delta_f \cdot = \Delta \cdot + \langle \nabla f  , \nabla \cdot \rangle.
\] 
He obtained a dimension-free elliptic-type gradient estimate which improves Souplet--Zhang's gradient estimate. Recently by considering smooth metric measure spaces with $\Ric_f$ bounded from below, Wu \cite{Wu}, Dung--Khanh \cite{DK}, and Huang--Ma \cite{HM} obtained some gradient estimates of Hamilton's and Souplet--Zhang's type for the general evolution equation
\begin{equation}\label{ricc}
u_t=\Delta_fu+au\log u+bu
\end{equation} 
where $a, b$ are $C^1$ functions in the $x$-variable defined on $M\times (0, +\infty)$. We refer the reader to \cite{DK, Wu} for further references.


The present paper is also inspired by a work due to Bidaut-V\'{e}ron and V\'{e}ron \cite{BVV}. In \cite{BVV}, for some constants $b<0$ and $p>1$ the authors considered the following Yamabe-type equation 
$$\Delta u+bu+u^p=0$$
on compact manifolds. Under some additional conditions on the Ricci tensor, the dimension constant, and the ranges of $b$, $p$, they showed that the above Yamabe-type equation has only trivial solution. When the underlying Riemannian manifold is complete, non-compact, Brandolini et al. \cite{BRS} considered the Yamabe-type equation
\begin{equation}\label{yamabe}
\Delta u+a(x)u+A(x)u^p=0,
\end{equation}
where $a(x)$ and $A(x)$ are continuous functions on $M$ and $p>1$. If $A(x)<0$ everywhere, they proved that \eqref{yamabe} has no positive bounded solution satisfying some integrable conditions. For further discussion on Yamabe's problem, we refer the reader to \cite{MRS} and the references therein.

In the literature, an analogue but more general form of Yamabe's equation is the so-called Einstein-scalar field Lichnerowicz equation. When the underlying manifold $M$ has dimension $n \geqslant 3$, the Einstein-scalar field Lichnerowicz equation takes the following form
\begin{equation}\label{eqLichnerowicz3+}
\Delta u+bu+Au^p+Bu^{-q}=0,
\end{equation}
where $a, A, B$ are smooth functions and $p=(n+2)/(n-2)$ and $q=-(3n-2)/(n-2)$ while on $2$-manifolds, the Einstein-scalar field Lichnerowicz equation is given as follows
\begin{equation}\label{eqLichnerowicz2}
\Delta u +Ae^{2u}+Be^{-2u}+D=0;
\end{equation}
see \cite{CB-2009, Ngo}. A Liouville-type result for positive solutions of a slightly generalization of \eqref{eqLichnerowicz3+} was obtained in \cite[Section 8]{Ngo}. 

Recently, Zhao \cite{Zh1, Zh2} and Song--Zhao \cite{SZh} considered the general Lichnerowicz equation
\begin{equation}\label{Lich}
\Delta_fu+bu+Au^p+Bu^{-q}=0,
\end{equation}
where $b, A, B$ are smooth functions on smooth metric measure spaces $(m, g, e^{-f}dv)$ and $p, q\geqslant 0$. They derived some gradient estimates for positive solution $u$ and proved some Harnack type inequalities.
 
Taking \eqref{ricc}, \eqref{eqLichnerowicz3+}, \eqref{eqLichnerowicz2}, and \eqref{Lich} into accounts, in this paper, we investigate the following general $f$-heat equation on $(M, g, e^{-f}dv)$
\begin{equation}\label{eqMainPDE-1}
u_t=\Delta_f u+au\log u+bu+Au^p+Bu^{-q}
\end{equation}
and
\begin{equation}\label{eqMainPDE-2}
u_t=\Delta_f u+Ae^{2u}+Be^{-2u}+D
\end{equation}
where $a$, $b$, $A$, $B$, and $D$ are constants and $p,q$ are non-negative constants. The primary aim of the paper is to obtain gradient estimates for positive, bounded solutions of \eqref{eqMainPDE-1} and \eqref{eqMainPDE-2}.

For positive, bounded solutions of \eqref{eqMainPDE-1}, there exists some constant $C>0$ such that $0 < u < C$ everywhere on $M$. Hence by the change of variable $v =u/C$, we know that $v$ also solves \eqref{eqMainPDE-1} with $b$ replaced by $b+a\log C$, $A$ replaced by $AC^{p-1}$, and $B$ replaced by  $BC^{-q-1}$. However, there holds $0<v<1$ everywhere. Similarly, for each bounded solution of \eqref{eqMainPDE-2}, there exists some constant $C>0$ such that $-C < u < C$ everywhere on $M$. Under the change of variable $v=2u+2C+1$, we easily verify that $v$ also solves \eqref{eqMainPDE-2} with $A$ replaced by $2Ae^{-2C-1}$, $B$ replaced by $2Be^{2C+1}$, $D$ replaced by $2D$, that is
\begin{equation}\label{eqMainPDE-3}
v_t=\Delta_f v+Ae^{v}+Be^{-v}+D.
\end{equation}
Furthermore, we also have $1 < v < 4C+1$ everywhere. Therefore, throughout this paper, by the boundedness of a solution $u$ of \eqref{eqMainPDE-1} we mean $0 < u < 1$ and of \eqref{eqMainPDE-3} we mean $1 < u < C$, for some positive constant $C > 0$.

We are now in position to state our results. The first set of results concerns gradient estimates for bounded solutions of \eqref{eqMainPDE-1} and \eqref{eqMainPDE-3} on a complete smooth metric measure space $(M,g,e^{-f}dv)$. In the first part, we provide several gradient estimates for bounded solutions of \eqref{eqMainPDE-1} and \eqref{eqMainPDE-3}. For bounded solutions of \eqref{eqMainPDE-1}, we obtain the following result.

\begin{theorem}\label{thmGE-I}
Let $(M,g,e^{-f}dv)$ be an $n$-dimensional complete smooth metric measure space with 
\[
\Ric_f\geqslant -(n-1)K
\] 
for some constant $K\geqslant 0$ in $B(x_0,R)$, some fixed point $x_0$ in $M$, and some fixed radius $R\geqslant 2$. Let $a$, $b$, $A$, $B$, $p$, and $q$ be constants with $A\leqslant 0$, $B\geqslant 0$, $p\geqslant 1$, and $q \geqslant 0$. Assume that $u \in (0, 1]$ is a smooth solution to the general $f$-heat equation \eqref{eqMainPDE-1} in the cylinder $Q_{R,T}=B(x_0,R)\times[t_0-T,t_0]$, where $t_0\in \mathbb{R}$ and $T>0$, that is
\[
u_t=\Delta_f u+au\log u+bu+Au^p+Bu^{-q}.
\] 
Then there exists a constant $c(n)$ such that 
\begin{equation}\label{eqGE-I}
\frac{|\nabla u|}{u}\leqslant c(n)\Big(\frac{1+\sqrt{|\alpha|R}}{R}+\frac{1}{\sqrt{t-t_0+T}}+\sqrt{K}+\sqrt{H_0}\Big)(1-\log u)
\end{equation}
in $Q_{R/2,T}$ with $t\neq t_0-T$. Here we denote $$H_0=\max\{a+b,0\}$$ and $$\alpha:=\max_{x\in B(x_0,1)}\Delta_f r(x),$$
where $r(x)$ is the distance from $x$ to $x_0$.
\end{theorem}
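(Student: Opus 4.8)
The plan is to adapt the Souplet--Zhang / Li--Yau method using the auxiliary function $w = \log u$, which, after a short computation from \eqref{eqMainPDE-1}, satisfies a parabolic inequality of the form
\[
w_t = \Delta_f w + |\nabla w|^2 + aw + b + A u^{p-1} + B u^{-q-1}.
\]
Since $A\leqslant 0$, $B\geqslant 0$, $p\geqslant 1$, $q\geqslant 0$ and $0<u\leqslant 1$, the terms $A u^{p-1}$ and $B u^{-q-1}$ are nonnegative, and one must keep careful track of how $B u^{-q-1}$ behaves near $u\to 0$; this is where the logarithmic weight $1-\log u = 1 - w$ enters. Following the standard trick I would set
\[
\omega = \frac{|\nabla \log u|^2}{(1-\log u)^2} = \frac{|\nabla w|^2}{(1-w)^2},
\]
and compute a Bochner-type inequality for $\omega$. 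Using the $f$-Bochner formula
\[
\Delta_f |\nabla w|^2 = 2|\Hess w|^2 + 2\langle \nabla w, \nabla \Delta_f w\rangle + 2\Ric_f(\nabla w, \nabla w)
\]
together with $\Ric_f \geqslant -(n-1)K$ and the Cauchy--Schwarz bound $|\Hess w|^2 \geqslant (\Delta w)^2/n$, I would derive a differential inequality of the schematic form
\[
(\Delta_f - \partial_t)\omega \geqslant \frac{2w}{1-w}\langle \nabla w,\nabla \omega\rangle + \text{(positive main term in }\omega\text{)} - c(n)\big(K + H_0 + |\nabla w|\,\omega^{1/2}\big)\omega,
\]
where the crucial sign observations are that $\frac{a w}{1-w}$ is controlled by $H_0=\max\{a+b,0\}$ (here one splits cases on the sign of $a$ and uses $0<u\leqslant 1$), and that the contributions of $Au^{p-1}$ and $Bu^{-q-1}$ have the \emph{favorable} sign after differentiation because their coefficients $A\leqslant 0$ and $B\geqslant 0$ combine with the monotonicity of $u^{p-1}$, $u^{-q-1}$ in $u$. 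This is the step I expect to be the main obstacle: showing that, after multiplying by the cutoff and differentiating, the nonlinear terms $Au^p$, $Bu^{-q}$ never produce a term of the wrong sign that cannot be absorbed — in particular the singular term $Bu^{-q-1}$ must be handled so that it only helps, which relies delicately on $B\geqslant 0$ and $u\leqslant 1$.

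Next I would introduce the standard Li--Yau cutoff function $\psi$ on the parabolic cylinder $Q_{R,T}$, with $\psi\equiv 1$ on $Q_{R/2,T}$, $|\nabla\psi|^2/\psi \leqslant c/R^2$, $|\Delta_f\psi|\leqslant c(1+\sqrt{|\alpha|R})/R^2$ (this last bound uses the Laplacian comparison for $\Delta_f r$ and is exactly why the quantity $\alpha=\max_{B(x_0,1)}\Delta_f r$ appears in \eqref{eqGE-I}), and $|\psi_t|\leqslant c/T$. Then I would apply the maximum principle to $\psi\omega$ on $Q_{R,T}$: at an interior maximum point $(x_1,t_1)$ one has $\nabla(\psi\omega)=0$, $\Delta_f(\psi\omega)\leqslant 0$, $\partial_t(\psi\omega)\geqslant 0$. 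Substituting the Bochner inequality, using the cutoff bounds to estimate the gradient-drift and Laplacian-of-cutoff terms via Young's inequality, and collecting powers of $\omega^{1/2}$, I would obtain a quadratic (in $\sqrt{\psi\omega}$) inequality
\[
\psi\omega \leqslant c(n)\Big(\frac{1+\sqrt{|\alpha|R}}{R^2} + \frac{1}{t_1-t_0+T} + K + H_0\Big)
\]
at the maximum, hence the same bound for $\omega$ on $Q_{R/2,T}$ after taking square roots; rewriting $\omega^{1/2}=|\nabla u|/(u(1-\log u))$ gives exactly \eqref{eqGE-I}. The appearance of $\sqrt{t-t_0+T}$ rather than $\sqrt{T}$ comes from using the weaker time-cutoff $\psi_t$ localized so that at any chosen evaluation point $t$ one has the factor $1/(t-t_0+T)$; this is a routine but slightly fiddly bookkeeping step. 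Throughout, the only genuinely new input beyond the known Souplet--Zhang-type arguments (as in \cite{Wu,DK,HM}) is the systematic control of the two extra nonlinear terms, so the write-up will emphasize the sign analysis in the Bochner step and otherwise follow the established cutoff-and-maximum-principle scheme.
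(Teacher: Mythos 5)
Your overall plan --- pass to $w=\log u$, work with the normalized quantity $\omega=|\nabla w|^2/(1-w)^2$, derive a parabolic inequality via Bochner, localize with a Souplet--Zhang cutoff, and apply the maximum principle at a spatial maximum of $\psi\omega$ --- is exactly the paper's strategy, and your sign analysis for the two extra nonlinear terms (that $(-p+\tfrac{-h}{1-h})Au^{p-1}\geqslant 0$ under $A\leqslant 0,p\geqslant 1$ and $(q+\tfrac{-h}{1-h})Bu^{-q-1}\geqslant 0$ under $B\geqslant 0,q\geqslant 0$, so both may be discarded, leaving only $H_0=\max\{a+b,0\}$) is precisely how the paper collapses $H_1$ to $H_0$ in Lemma 2.1.

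There is one genuine technical divergence worth flagging. You propose to apply the $f$-Bochner formula to $w=\log u$ and then obtain an inequality for $\omega=|\nabla w|^2/(1-w)^2$ by product rule. The paper instead applies Bochner directly to the function $\log(1-h)=\log(1-\log u)$, for which $|\nabla\log(1-h)|^2$ is literally the target quantity, and simply drops the term $2|\Hess\log(1-h)|^2\geqslant 0$. This is not merely cosmetic. With your route, the product/chain rule produces cross terms of the form $\Hess(\log u)(\nabla\log u,\nabla\log u)$ with $h$-dependent, sign-indefinite coefficients; to recover the favourable quadratic term $2(1-h)\omega^2$ and the clean drift term $\tfrac{2h}{1-h}\langle\nabla\omega,\nabla h\rangle$ you must rewrite these cross terms using $\nabla\omega=\phi\nabla|\nabla w|^2+|\nabla w|^2\nabla\phi$, which hides the Hessian and reorganizes the $\omega^2$ bookkeeping. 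Unless you do this reorganization explicitly, a naive Young absorption of the cross term against $2|\Hess w|^2$ leaves a residual of order $(1-w)\,\omega^2$ with the wrong sign, and since $1-w=1-\log u$ is unbounded as $u\to 0$, that would not be absorbable. Your schematic inequality already shows this danger: the spurious term $|\nabla w|\,\omega^{1/2}\cdot\omega$ is, after substituting $|\nabla w|=(1-w)\omega^{1/2}$, a $(1-w)\omega^2$ term, which competes directly with the would-be main positive term. The paper's choice of applying Bochner to $\log(1-\log u)$ is what sidesteps this entirely. Also, the Cauchy--Schwarz bound $|\Hess w|^2\geqslant(\Delta w)^2/n$ is a Li--Yau device and plays no role here; the Souplet--Zhang argument discards the Hessian term outright.

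In short: your approach is the same as the paper's except for the choice of which function to feed into Bochner, and that choice is not innocuous --- it either forces a nontrivial extra rewriting step that you have not carried out, or it leaves an unabsorbable term. Switching to Bochner applied to $\log(1-\log u)$ (equivalently, carrying out the $\nabla\omega$ substitution explicitly) closes the gap.
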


Next, for bounded solutions of \eqref{eqMainPDE-3}, we also obtain a similar result which can be stated as follows.

\begin{theorem}\label{thmGE-II}
Let $(M^n, g, e^{-f}dv)$ be a smooth metric measure space with $\Ric_f\geqslant -(n-1)K$. Suppose that $u$ is a positive smooth solution to the Hamiltonian constrain type equation
$$u_t=\Delta_fu+Ae^{u}+Be^{-u}+D$$ 
on $Q_{R, T}\subset M\times[0,\infty)$. If $1 < u < C$ then there exists a constant $c$ depending only on $n$ such that 
\begin{equation}\label{eqGE-II}
\frac{|\nabla u|}{\sqrt[]{u}}\leqslant c(n)\sqrt{C}\Big(\frac{1+\sqrt{|\alpha|R}+\sqrt[]{C}}{R}+\frac{1}{\sqrt{t-t_0+T}}+\sqrt{K}+\sqrt{H_2}\Big)
\end{equation}
in $Q_{R/2,T}$ with $t\neq t_0-T$, where 
\[
H_2=\max\limits_{Q_{R,T}}\Big\{Ae^u\frac{2u-1}{2u}-Be^{-u}\frac{2u+1}{2u}-\frac{D}{2u}, 0 \Big\}. 
\]
\end{theorem}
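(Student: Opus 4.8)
The plan is to mimic the standard Souplet--Zhang localization argument, now adapted to the nonlinearity $Ae^u+Be^{-u}+D$. First I would introduce the auxiliary function $w = \log u$ (or work directly with $u$); since $1<u<C$, it is cleaner here to set $h=\sqrt{u}$ or to work with the quantity $w=\sqrt{u}$ so that the target gradient expression $|\nabla u|/\sqrt{u}$ becomes a multiple of $|\nabla w|$. Writing the equation satisfied by $w$ and computing, one sets $\omega = |\nabla w|^2/w^2$ (the quantity to be bounded) and applies the Bochner formula for $\Delta_f$, using $\Ric_f \geqslant -(n-1)K$ to control the Bakry--\'Emery Ricci term. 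The nonlinear forcing terms $Ae^u$, $Be^{-u}$, $D$ enter through $\nabla$ of the equation; here the sign hypotheses and the definition of $H_2$ are chosen precisely so that the ``bad'' contribution is absorbed into $H_2$ while the ``good'' terms can be discarded. This yields a differential inequality of the form $(\Delta_f - \partial_t)\omega \geqslant (\text{positive quadratic in }\omega) - (\text{lower order terms involving }K, H_2)$.

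Next I would localize: choose a smooth cutoff $\psi = \psi(r(x),t)$ supported in $Q_{R,T}$, equal to $1$ on $Q_{R/2,T}$, with the usual bounds $|\nabla \psi|^2/\psi \leqslant c/R^2$, $|\Delta_f \psi| \leqslant c(1+\sqrt{|\alpha|R})/R^2 + c\sqrt{K}/R$ (the $\Delta_f r$ comparison near $x_0$ is where $\alpha$ enters, and the Laplacian comparison for $\Ric_f$ contributes the $\sqrt K/R$ term), and $|\partial_t \psi| \leqslant c/T$. Apply the maximum principle to $\psi\omega$ on the compact set $Q_{R,T}$: at an interior space-time maximum point $(x_1,t_1)$ one has $\nabla(\psi\omega)=0$, $\Delta_f(\psi\omega)\leqslant 0$, $\partial_t(\psi\omega)\geqslant 0$. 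Substituting the Bochner inequality and the cutoff bounds, and using Cauchy--Schwarz (together with $\nabla\omega = -(\omega/\psi)\nabla\psi$ at the max point) to handle the cross terms $\langle\nabla\psi,\nabla\omega\rangle$, I obtain at $(x_1,t_1)$ an inequality of the shape
\[
(\psi\omega)^2 \leqslant c(n)\Big(\frac{1+|\alpha|R}{R^2}+\frac{1}{t_1-t_0+T}+K+H_2\Big)(\psi\omega) + (\text{similar lower-order})^2,
\]
so that $\psi\omega$ is bounded at its maximum by a constant times the square of the bracket on the right-hand side of \eqref{eqGE-II}; evaluating on $Q_{R/2,T}$ and taking square roots gives the stated estimate after restoring the factors of $C$ coming from the passage between $w$ and $u$ (note $u<C$ is what produces the overall $\sqrt C$ and the $\sqrt C/R$ term inside the bracket).

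The main obstacle I anticipate is the careful bookkeeping of the forcing terms when the equation for $w$ is differentiated: one must check that, under $A\leqslant 0$, $B\geqslant 0$ (implicit in the setup) and the precise combination defining $H_2$, every term of indefinite sign is dominated by $H_2\,\omega$ or by the good negative quadratic term, with no leftover term that degenerates as $u\to 1$ or $u\to C$. A secondary technical point is that $r(x)$ need not be smooth at the cut locus; this is handled in the usual way by Calabi's trick (replacing $r$ by a barrier from above), and near $x_0$ the quantity $\alpha = \max_{B(x_0,1)}\Delta_f r$ is used in place of a curvature bound on the whole ball, exactly as in Theorem \ref{thmGE-I}.
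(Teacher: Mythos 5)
Your overall strategy (Bochner for $\Delta_f$, cutoff, maximum principle, Laplacian comparison via $\alpha$) is the right one and matches the paper, but the key step --- the choice of the quantity to which Bochner is applied --- is off, and it is off in a way that breaks the argument rather than being a cosmetic variant.

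You propose $\omega=|\nabla w|^2/w^2$ with $w=\sqrt u$, i.e.\ $\omega=|\nabla\sqrt u|^2/u=\tfrac14|\nabla\log u|^2=|\nabla g|^2$ with $g=\log\sqrt u$. Writing the equation for $g$ one gets $\Delta_f g=g_t-2\omega-F$ with $F=(Ae^u+Be^{-u}+D)/(2u)$. When you contract $\nabla\Delta_f g$ with $\nabla g$ in the Bochner formula, the term $-2\omega$ contributes only $-2\langle\nabla\omega,\nabla g\rangle$, which vanishes (up to $\nabla\psi$ terms) at the maximum of $\psi\omega$. There is no positive quadratic term in $\omega$ at all; your claimed inequality $(\Delta_f-\partial_t)\omega\geqslant(\text{positive quadratic in }\omega)-\cdots$ is false for this $\omega$, and without the quadratic the maximum-principle step cannot close. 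Compare with the paper's choice (Lemma \ref{lemmaeqMainPDE-3}): they set $w=|\nabla u^{1/2}|^2$ (no division by $u$), for which $\Delta_f u^{1/2}=\tfrac{1}{2u^{1/2}}(u_t-Ae^u-Be^{-u}-D)-\tfrac{w}{u^{1/2}}$. Crucially the gradient-square term carries a factor $1/u^{1/2}$, and it is the derivative of \emph{that} factor which produces the positive quadratic $+2w^2/u\geqslant 2w^2/C$. So the $\sqrt C$ in \eqref{eqGE-II} (and the $\sqrt C/R$ term) comes from the coefficient $1/u\geqslant 1/C$ in the quadratic, not --- as you suggest --- from converting the bound on $\omega$ back to $|\nabla u|/\sqrt u$; if one bounds $|\nabla u^{1/2}|^2$ as the paper does, the factor $\sqrt C$ is already in the $w$-estimate. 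Two secondary misreadings: the theorem makes no sign assumptions on $A,B,D$ (the max in $H_2$ handles that), so you should not presume $A\leqslant 0$, $B\geqslant 0$; and the coefficient you would actually obtain from differentiating $F$ for $g=\log\sqrt u$ involves $(u-1)/u$ and $(u+1)/u$, not the $(2u-1)/(2u)$ and $(2u+1)/(2u)$ that define $H_2$ in the statement, so even if the quadratic issue were fixed, you would be proving a theorem with a different $H_2$.
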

It is worth noticing that the left hand side of \eqref{eqGE-I} and \eqref{eqGE-II} are slightly different by $1/\sqrt u$. As far as we know, this is the first result of gradient estimates for parabolic equations with exponential nonlinearities.

In the next part, our result concerns gradient estimates for bounded solutions of \eqref{eqMainPDE-1} on manifolds evolving under the Ricci flow. In \cite{Bai}, Bailesteanu et al. considered a complete manifold $(M, g)$ without boundary and solutions of the forward Ricci flow. They obtained a local spacelike and spacetime gradient estimates for positive solutions of the heat equation; see also \cite{Liu} and \cite{Sun} for further discussion. In \cite{Zhang}, Zhang investigated the heat equation of the conformal Laplacian under the backward Ricci flows. He proved a qualitatively sharp, global Gaussian upper bound. Moreover, he also gave a local and global gradient estimates of the log-temperature under the backward and forward Ricci flow. In particular, Zhang considered on a complete compact manifold $(M, g)$ solutions of the backward Ricci flow
\[
\left\{
\begin{split}
u_t&=\Delta u\\
\frac{\partial}{\partial t}g(x, t)&=-2\Ric (x, t)
\end{split}
\right.
\]
with $x\in M$ and $t\in[0, T]$. He proved a local and global space-only gradient estimates for the bounded solution $u$. 

In this paper, we study a general backward Ricci flow and obtain the following gradient estimates. To illustrate our approach, we only consider bounded solutions of \eqref{ricflow} driven by the Lichnerowicz equation when $n \geqslant 3$. 

\begin{theorem}\label{thmGEUnderRicci-I}
Suppose that $M$ is a complete Riemannian without boundary and $(M, g(x, t))$ is a solution of the following backward Ricci flow
\begin{equation}\label{ricflow}
\left\{
\begin{split}
u_t&=\Delta u+au\log u+bu+Au^p+Bu^{-q}\\
\frac{\partial}{\partial t}g(x, t)&=-2\Ric (x, t)
\end{split}
\right.
\end{equation} 
with $x\in M$ and $t\in[0, T]$. Assume that $|\Ric (x, t)|\leqslant \kappa$ for some $\kappa\geqslant 0$ and for all $(x, t)\in \widetilde{Q}_{R, T}:=B(x_0, R)\times[0, T]$ for some fixed $x_0\in M$. Let $u \in (0, 1]$ be a smooth solution to the equation \eqref{ricflow} with $A\leqslant0, B\geqslant 0, p\geqslant 1, q>0$. Then there exists a constant $c(n)$ such that 
\begin{equation}\label{eqGE-UnderRicciFlow}
\frac{|\nabla u|}{u}\leqslant c(n)\Big(\frac{1+\sqrt{|\alpha|R}}{R}+\frac{1}{\sqrt[]{t}}+\sqrt{\kappa}+\sqrt{H_0}\Big)(1-\log u)
\end{equation}
in $\widetilde{Q}_{R/2,T}$, where $H_0=\max\{a+b,0\}$. 
\end{theorem}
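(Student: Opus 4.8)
The plan is to run the Souplet--Zhang localization scheme, exactly as in the proof of Theorem~\ref{thmGE-I}, but now for the time-dependent metric $g(x,t)$. Since $0<u\le 1$, set $v=\log u\le 0$ and $\phi=1-v\ge 1$; then $|\nabla u|/u=|\nabla v|=\phi\sqrt{w}$ with $w:=|\nabla v|^{2}/\phi^{2}$, so that \eqref{eqGE-UnderRicciFlow} is equivalent to an upper bound for $w$ on $\widetilde Q_{R/2,T}$. A direct substitution turns the first line of \eqref{ricflow} into
\[
v_{t}=\Delta v+|\nabla v|^{2}+G(v),\qquad G(v):=av+b+Ae^{(p-1)v}+Be^{-(q+1)v}.
\]

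The first main step is to derive the basic differential inequality for $w$. Applying the Bochner formula together with $\partial_{t}g^{ij}=2\Ric^{ij}$ coming from the backward flow in \eqref{ricflow}, one gets the cancellation
\[
(\Delta-\partial_{t})|\nabla v|^{2}=2|\Hess v|^{2}+2\langle\nabla v,\nabla(\Delta v-v_{t})\rangle,
\]
in which no curvature term survives; the bound $|\Ric|\le\kappa$ will only enter later, through the cutoff. Inserting the equation for $v$, using the Kato-type inequality $|\Hess v|^{2}\ge|\nabla|\nabla v||^{2}$, and re-expressing everything through $w$ and $\nabla w$, the contributions of the nonlinearity organize themselves into the terms $-2Ae^{(p-1)v}\big[(p-1)+\phi^{-1}\big]w$ and $2Be^{-(q+1)v}\big[(q+1)-\phi^{-1}\big]w$, both nonnegative because $A\le 0$, $p\ge 1$, $B\ge 0$, $q>0$ and $\phi\ge 1$, while the $a$- and $b$-parts collapse, using $v=1-\phi$, into $-2(a+b)\phi^{-1}w\ge-2H_{0}w$. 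Discarding the favorable terms yields the Souplet--Zhang inequality
\[
(\Delta-\partial_{t})w\ \ge\ \frac{2v}{1-v}\langle\nabla v,\nabla w\rangle+2(1-v)\,w^{2}-2H_{0}\,w,
\]
that is, the usual one with the Ricci lower bound replaced by $H_{0}$.

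The second step is the localized maximum-principle argument. I would choose a space-time cutoff $\psi=\eta(r(x,t))\chi(t)$ equal to $1$ on $\widetilde Q_{R/2,T}$, supported in $\widetilde Q_{R,T}$, vanishing at $t=0$, with the standard bounds $|\nabla\psi|^{2}/\psi\le c(n)/R^{2}$, $|\Delta\psi|\le c(n)(1+|\alpha|R+\sqrt{\kappa}\,R)/R^{2}$ and $|\partial_{t}\psi|\le c(n)(1/t+\kappa)$ (the constants $\alpha$ and $\kappa$ appearing through the Laplacian comparison theorem and the distance-distortion estimate for $g(t)$). At a maximum point $(x_{1},t_{1})$ of $\psi w$ over $\widetilde Q_{R,T}$ — the cases $\psi w(x_{1},t_{1})=0$ or $t_{1}=0$ being trivial — one has $\nabla(\psi w)=0$, hence $\nabla w=-w\nabla\psi/\psi$, and $(\Delta-\partial_{t})(\psi w)\le 0$. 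Substituting the differential inequality above, the quadratic term $2(1-v)w^{2}$ absorbs, after one round of Young's inequality, every lower-order cutoff and transport error (the $|\alpha|R$-, $\sqrt\kappa$- and $1/t$-contributions passing to $\sqrt{|\alpha|R}/R$, $\sqrt\kappa$ and $1/\sqrt{t}$ after taking square roots at the end), giving $\psi^{2}w\le c(n)\big(R^{-2}(1+|\alpha|R)+t_{1}^{-1}+\kappa+H_{0}\big)$ at the maximum and hence everywhere; restricting to $\widetilde Q_{R/2,T}$ where $\psi\equiv 1$ and taking square roots produces \eqref{eqGE-UnderRicciFlow}.

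The genuinely Ricci-flow-specific ingredients are exactly two: the Bochner cancellation above, which is why no curvature term appears inside $(\Delta-\partial_{t})w$, and the control of $\Delta_{g(t)}\psi$ and $\partial_{t}\psi$ for the evolving metric — the latter requiring the distance-distortion estimate under $|\Ric|\le\kappa$. The first is immediate; the second is the main technical obstacle, and it is also where the constants $\alpha$ and $\sqrt{\kappa}$ in \eqref{eqGE-UnderRicciFlow} originate. The remaining algebra is routine and parallels the proof of Theorem~\ref{thmGE-I}.
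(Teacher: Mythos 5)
Your proposal is correct and follows essentially the same route as the paper: the same Bochner cancellation under the backward Ricci flow (so no curvature term survives in $(\Delta-\partial_t)w$), the same reduction to the differential inequality $(\Delta-\partial_t)w\geqslant \tfrac{2v}{1-v}\langle\nabla v,\nabla w\rangle+2(1-v)w^2-2H_0w$ by dropping the favourable nonlinearity terms, and the same localized maximum-principle argument using the Laplacian comparison for $\Delta r$ and the distance-distortion estimate $|\partial_t\dist(x_1,x_0,t)|\leqslant\kappa R$ to control $\partial_t\psi$. The only cosmetic difference is that you phrase the Bochner step through $|\nabla v|^2$ and a Kato-type inequality before reorganizing into $w$, whereas the paper applies Bochner--Weitzenb\"ock directly to $\log(1-h)$; both converge to the identical key inequality.
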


Our final set of results concerns gradient estimates for bounded solutions of \eqref{eqMainPDE-1} and \eqref{eqMainPDE-3} on compact manifolds with boundary. 

Historically, gradient estimates for parabolic equations on compact Riemannian manifolds with convex boundary was first investigated by Li and Yau in \cite{LY}. Then, in \cite{Kasue}, Kasue studied function theoretic properties of a complete Riemannian manifold and proved some Laplacian comparison theorems on Riemannian manifolds with boundary. Later, following the Li--Yau method, Chen \cite{Chen} and Wang \cite{Wang} considered compact Riemannian manifolds with non-convex boundary satisfying an \textit{interior rolling $R$-ball condition} (see Definition \ref{def-InteriorRollingRBall}). They estimated the first Neumann eigenvalue and derived gradient estimates for global heat kernels. Recently, Wang \cite{FYWang} used the reflecting diffusion process and a conformal change of metric to obtain Li--Yau type gradient estimates and Harnack inequalities on compact manifolds with non-convex boundary. 

Inspired by the explosion of the work related to Li--Yau type gradient estimates as well as Souplet-Zhang type gradient estimates, we point out that gradient estimates of Souplet--Zhang type can be obtained on such manifolds. To the best of our knowledge, this is the first result of gradient estimates of Souplet--Zhang type for compact manifolds. That being said, for bounded solutions of \eqref{eqMainPDE-1}, we obtain the following result.

\begin{theorem}\label{thmGEWBoundary-I}
Let $M$ be a compact Riemannian manifold with boundary $\partial M$. Suppose that $\partial M$ satisfies the interior rolling $R$-ball condition. Let $K$ and $H$ be non-negative constants such that the Ricci curvature $\Ric_M$ of $M$ is bounded from below by $-K$ and the second fundamental form of $\partial M$ is bounded from below by $-H$. By choosing $R$ ``small", any positive solution $u(x,t)$ of 
\begin{equation}\label{neuman}
\left\{
\begin{split}
u_t &=\Delta u+au\log u+bu+Au^p+Bu^{-q}&\\
\frac{\partial u}{\partial \nu}\Big|_{\partial M} &=0
\end{split}
\right.
\end{equation}
where $a,b,A,B$ are constants and $p,q$ are non-negative constants, if satisfies $u \leqslant 1$, then 
\begin{equation}\label{eqGE-bdry-I}
\frac{|\nabla u|}{u}\leqslant {(1+H)}\Big(\sqrt[4]{24}\sqrt{C_1}{(1+H)}+\sqrt[4]{2}\frac{1+H}{\sqrt{t}}+\sqrt{\frac{{C_2}}{{R}}}+\frac{\sqrt[4]{C_3}}{R}\Big)(1-\log u)
\end{equation}
on $M\times(0, \infty)$. Here 
$$\begin{aligned}
C_1&=
\left\{
\begin{split}
&K+\max\{a+b,0\}+\\
&\sup\limits_{M\times[0,\infty)}\{Ap,A(p-1)u^{-1},Apu^{-1},0\}+\\
&\sup\limits_{M\times[0,\infty)}\{(-q-1)Bu^{-q-1},0\}
\end{split}
\right\},\\
C_2&=2\sqrt{24}(1+H)(n-1)H(3H+1),\\
C_3&=24 \big[ 16H^2+(1+H)H \big]^2+864(1+H)^4H^4.
\end{aligned}$$
\end{theorem}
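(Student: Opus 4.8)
The plan is to follow the by-now-standard Souplet–Zhang strategy, adapted to the Neumann boundary setting via the interior rolling $R$-ball condition. Set $w = \log u$; since $0 < u \leqslant 1$ we have $w \leqslant 0$, so $1 - w \geqslant 1$ is positive. Put
\[
\omega = \frac{|\nabla w|^2}{(1-w)^2} = \frac{|\nabla u|^2}{u^2(1-w)^2},
\]
which is the quantity we must bound. A direct computation, starting from the equation $u_t = \Delta u + au\log u + bu + Au^p + Bu^{-q}$ rewritten for $w$, gives an evolution inequality of Bochner type for $\omega$: schematically,
\[
(\Delta - \partial_t)\omega \geqslant \frac{2w}{1-w}\,\langle \nabla w, \nabla \omega\rangle + (1-w)\,\omega^2 - \big(\text{curvature and nonlinear error terms}\big)\,\omega,
\]
where the Ricci lower bound $\Ric_M \geqslant -K$ enters through the Bochner formula, and the contribution of $au\log u + bu + Au^p + Bu^{-q}$ is controlled using the sign assumptions $A \leqslant 0$, $B \geqslant 0$, $p \geqslant 1$, $q \geqslant 0$ together with $0 < u \leqslant 1$; these are exactly the ingredients packaged into the constant $C_1$. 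The nonlinear good term $(1-w)\omega^2$ on the right is what ultimately produces the bound.

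The second step is the localization. On a manifold \emph{with} boundary one cannot use an arbitrary interior cutoff without generating a boundary contribution, so I would invoke the interior rolling $R$-ball condition (Definition~\ref{def-InteriorRollingRBall}) and the associated distance-function analysis of Chen and Wang: the condition guarantees that, after possibly shrinking $R$, the function $\rho$ measuring distance to the boundary (or the appropriate modified distance built from the rolling ball) satisfies a one-sided Laplacian bound of the form $\Delta \rho \leqslant c(n-1)H(1 + \cdots)$ in a collar, and, crucially, the outward normal derivative of $\rho$ on $\partial M$ has the right sign. One then builds a space–time cutoff $\psi = \varphi(\rho)\,\eta(t)$ with $\varphi$ nonincreasing, so that at an interior maximum of $\psi\,\omega$ the boundary does no harm — here the Neumann condition $\partial u/\partial\nu = 0$ is used to kill the boundary term $\partial(\psi\omega)/\partial\nu$, exactly as in the Li–Yau-with-boundary argument. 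The terms $|\nabla\psi|^2/\psi$, $|\Delta\psi|$, and $|\psi_t|$ are then estimated using the $\rho$-Laplacian bound; this is where the mixed powers of $H$ and the factors $1/R$, $1/R^2$ in $C_2$ and $C_3$ come from, and where the hypothesis that $R$ be chosen ``small'' is consumed (to absorb lower-order collar errors and to ensure the collar where $\varphi$ varies lies in the rolling-ball region).

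The third step is the maximum-principle argument itself. Apply $(\Delta - \partial_t)$ to $\psi\omega$, evaluate at a point $(\bar x, \bar t)$ where $\psi\omega$ attains its maximum on $M \times (0,\tau]$ (WLOG the max is positive and, by the boundary analysis above, is attained at an interior point of $M$), use $\nabla(\psi\omega) = 0$ and $(\Delta - \partial_t)(\psi\omega) \leqslant 0$ there, and substitute the Bochner inequality for $\omega$. The cross term $\langle \nabla w, \nabla\omega\rangle$ is rewritten using $\nabla\omega = -\omega\nabla\psi/\psi$ and estimated by Cauchy–Schwarz together with the elementary inequality $|\nabla w| \leqslant (1-w)\sqrt{\omega}$, producing terms of the form $(\psi\omega)^{3/2}$ and $(\psi\omega)^{2}$; the good quadratic term $(1-w)\omega^2 \geqslant \omega^2$ dominates, and after multiplying through by $\psi$ one gets a polynomial inequality
\[
(\psi\omega)^2 \leqslant c(n)\Big(\frac{1}{R^4} + \frac{|\nabla\psi|^4}{\psi^2} + (\Delta\psi)^2 + \psi_t^2 + (K + H_0 + \cdots)^2\Big),
\]
i.e. a bound of the form $y^2 \leqslant a y^{3/2} + by + c$ for $y = \psi\omega$, which yields $y \leqslant c(a^2 + b + \sqrt c\,)$. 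Plugging in the cutoff estimates from step two, taking $\psi = 1$ on $M \times \{t = \tau\}$ (which forces $\rho$-support considerations, or more simply taking $M$ compact so $\varphi \equiv 1$ is admissible once $R$ is small relative to the injectivity-type scale of the collar), and finally taking square roots gives
\[
\frac{|\nabla u|}{u} = (1-w)\sqrt{\omega} \leqslant (1-\log u)\cdot\big(\text{the bracketed sum in }\eqref{eqGE-bdry-I}\big),
\]
after matching constants to get the displayed coefficients $\sqrt[4]{24}\sqrt{C_1}(1+H)$, $\sqrt[4]{2}(1+H)/\sqrt t$, $\sqrt{C_2/R}$, $\sqrt[4]{C_3}/R$, and the overall factor $(1+H)$.

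The main obstacle is the boundary handling in step two: one must verify that the interior rolling $R$-ball condition really does give a cutoff whose normal derivative has the correct sign at $\partial M$ so that the Neumann condition makes the boundary term drop out (rather than merely be small), and that the resulting one-sided bound on $\Delta\rho$ in the collar is uniform with the explicit $H$-dependence needed to produce $C_2$ and $C_3$. This is precisely the point where the hypothesis ``$R$ small'' is essential and where the argument genuinely departs from the boundaryless case; the Bochner computation and the final algebraic maximum-principle step are routine by comparison.
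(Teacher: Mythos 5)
Your overall framework is right: reduce to $w=|\nabla\log(1-\log u)|^2$, derive the Bochner-type differential inequality of Lemma~\ref{lemmamain}, and apply the maximum principle to a weighted quantity. But there is a genuine and decisive gap in step two, and it is exactly the step you yourself flag as ``the main obstacle.''

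The direction of monotonicity of the spatial weight is backwards, and this is not a cosmetic issue — it is the whole mechanism for ruling out a boundary maximum. In the paper's argument the weight is $\chi=(1+\phi)^2$ with $\phi(x)=\psi(r(x)/R)$, where $r(x)$ is the distance to $\partial M$ and $\psi$ is \emph{nondecreasing} with $\psi(0)=0$, $\psi'(0)=H$, so that $\chi\equiv 1$ on $\partial M$ and \emph{increases} into the interior up to $(1+H)^2$. One then sets $F=t\chi w$ and considers a maximum point $(p,t_0)$. If $p\in\partial M$, the first-order condition $\partial_\nu F(p,t_0)\geqslant 0$, after using $h_\nu=0$ (this is where the Neumann condition enters — to simplify the normal derivative, not to ``kill a boundary term''), $h_{\alpha\nu}=-\sum_\beta II_{\alpha\beta}h_\beta$, and $II\geqslant -H$, reduces to
\[
0\leqslant \frac{\partial_\nu\chi}{\chi}(p)+2H = -\frac{2H}{R}+2H,
\]
which is false once $R<1$ (and $H>0$). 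That strict contradiction is the only thing that forces the maximum to be interior. If instead, as you propose, $\varphi$ is taken \emph{nonincreasing} in $\rho$, then $\partial_\nu\varphi\geqslant 0$ on $\partial M$, and the boundary inequality becomes $0\leqslant\partial_\nu\varphi/\varphi+2H$, which is always true: no contradiction, the boundary maximum cannot be excluded, and the argument collapses. Your fallback remark that ``taking $\varphi\equiv 1$ is admissible since $M$ is compact'' fails for the same reason: with a constant weight the boundary inequality is $0\leqslant 2H$, again vacuous, and there is then no way to absorb the second fundamental form contribution. The weight $\chi$ is not a localization device (nothing is being localized — $M$ is compact and the maximum is taken over all of $M\times[0,T]$); it is a boundary-correcting conformal factor, and its normal derivative must be strictly negative with magnitude $\gtrsim 2H/R>2H$, which is precisely what the ``$R$ small'' hypothesis buys. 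Relatedly, your stated comparison estimate $\Delta\rho\leqslant c(n-1)H(1+\cdots)$ is in the wrong direction; because $\psi'\geqslant 0$, one needs the \emph{lower} bound $\Delta r\geqslant -(n-1)(3H+1)$ on the collar $\partial M(R)$ (Warner's index comparison, as used by Kasue, Chen, Wang) to bound $\Delta\phi$, and hence $\Delta\chi$, from below. Once these two points are corrected, the rest of your outline (the $\nabla(\chi w)=0$ substitution, the Young-inequality handling of the cross term, the quartic inequality for $w^2$) does match the paper's proof.
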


For an explanation of the notation ``small", we refer the reader to Remark \ref{rem} for details. For bounded solutions of \eqref{eqMainPDE-3}, we obtain the following gradient estimates.

\begin{theorem}\label{thmGEWBoundary-II}
Let $M$ be a compact Riemannian manifold with boundary $\partial M$. Let $\partial M$ satisfy the interior rolling $R$-ball condition. Let $K$ and $H$ be non-negative constants such that the Ricci curvature $\Ric_M$ of $M$ is bounded from below by $-K$ and the second fundamental form elements of $\partial M$ is bounded from below by $-H$. By choosing $R$ ``small", any solution $1<u(x,t)<C$ of 
$$
\left\{
\begin{split}
u_t &=\Delta u+Ae^{u}+Be^{-u}+D&\\
\frac{\partial u}{\partial \nu}\Big|_{\partial M} &=0
\end{split}
\right.
$$
where $A,B,D,C$ are constants and $p$ is a non-negative constant satisfies the following gradient estimate
\begin{equation}\label{eqGE-bdry-II}
\frac{|\nabla u|}{\sqrt[]{u}}\leqslant 2{(1+H)}\Big(\sqrt[4]{24}\sqrt{CC_1}(1+H)+\sqrt[4]{2}(1+H)\sqrt{\frac{C}{t}}+\sqrt{\frac{C_2}{R}}+\frac{\sqrt[4]{C_3}}{R}\Big)
\end{equation}
on $M\times[0,+\infty)$, where 
\[\begin{split}
C_1&= 
\left\{
\begin{split}
&K+\sup\limits_{M\times[0,+\infty)}\Big\{Ae^u\frac{2u-1}{2u}-Be^{-u}\frac{2u+1}{2u}-\frac{D}{2u}, 0 \Big\}
\end{split}
\right\},\\
C_2&=2C\sqrt{24}(1+H)(n-1)H(3H+1),\\
C_3&=24 C^2\big[16H^2+(1+H)H \big]^2+864C^4(1+H)^4H^4.
\end{split}\]
\end{theorem}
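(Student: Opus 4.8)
The plan is to follow the scheme used for Theorem~\ref{thmGEWBoundary-I}, the only genuinely new ingredient being that the Souplet--Zhang maximum-principle computation now has to be run on a \emph{compact} manifold whose boundary is merely assumed to satisfy the interior rolling $R$-ball condition (Definition~\ref{def-InteriorRollingRBall}). As a first reduction, exactly as in the proof of Theorem~\ref{thmGE-II}, I would pass to the new unknown $v=2\sqrt u$, so that $|\nabla v|=|\nabla u|/\sqrt u$ is precisely the quantity to be estimated and $2<v<2\sqrt C$. A direct computation turns the equation into
\[
v_t=\Delta v+\frac{|\nabla v|^2}{v}+G(v),\qquad \frac{\partial v}{\partial\nu}\Big|_{\partial M}=0,
\]
with $G(v)=\frac{2}{v}\big(Ae^{v^2/4}+Be^{-v^2/4}+D\big)$ and $G'(v)\le\sup_{M\times[0,+\infty)}\big\{Ae^u\frac{2u-1}{2u}-Be^{-u}\frac{2u+1}{2u}-\frac{D}{2u},\,0\big\}$, the last supremum being a summand of $C_1$. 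Writing $f:=|\nabla v|^2$, the Bochner identity applied to this equation produces the coercive term $2|\nabla v|^4/v^2$; since $v<2\sqrt C$ one has $2|\nabla v|^4/v^2\ge f^2/(2C)$, and it is exactly this factor $1/C$ that forces the powers of $\sqrt C$ in \eqref{eqGE-bdry-II}.

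Next I would fix $t_1>0$ and introduce the auxiliary function $F=t\,\varphi^2 f$ on $M\times[0,t_1]$, where the factor $t$ accounts for the $\sqrt{C/t}$ term in \eqref{eqGE-bdry-II} and $\varphi=\varphi(\rho)$, $\rho(x)=\dist(x,\partial M)$, is a cut-off adapted to the boundary. Following Chen~\cite{Chen} and Wang~\cite{Wang}, the interior rolling $R$-ball condition ensures that $\rho$ is smooth with $|\nabla\rho|=1$ on the collar $\{\rho\le R\}$ and, together with $\Ric_M\ge -K$ and the bound $\ge -H$ on the second fundamental form of $\partial M$, yields an upper Laplacian-comparison bound for $\Delta\rho$ on that collar, controlled in terms of $n$, $H$ and $1/R$. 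I would take $\varphi$ smooth and non-decreasing in $\rho$, equal to $1$ on $\{\rho\ge R\}$, bounded below by a positive constant that tends to $1$ as $R\to 0$, and with $\varphi'>H\varphi$ near $\partial M$ while $\varphi'/\varphi$ and $\varphi''/\varphi$ stay controlled on the collar in terms of $H$ and $1/R$; such a profile exists precisely when $R$ is ``small'' in the sense of Remark~\ref{rem}.

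The decisive step, and the one I expect to be the main obstacle, is to rule out that $F$ attains its maximum over $M\times[0,t_1]$ at a boundary point. Suppose it does, at $(x_1,t_1)$ with $x_1\in\partial M$. From $\partial u/\partial\nu=0$, the identity $\Hess v(\nabla v,\nu)=-\mathrm{II}(\nabla v,\nabla v)$, and $\mathrm{II}\ge -H$ one obtains $\partial_\nu f\le 2Hf$ on $\partial M$; since $\partial_\nu\rho=-1$ there,
\[
\frac{\partial F}{\partial\nu}\Big|_{(x_1,t_1)}=t_1\varphi^2\,\partial_\nu f-2t_1\varphi\varphi' f\le 2t_1\varphi f\,(H\varphi-\varphi')<0
\]
whenever $f(x_1,t_1)>0$, contradicting the fact that $\partial_\nu F\ge 0$ at a boundary maximum. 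Hence either $\nabla v(x_1,t_1)=0$, which makes \eqref{eqGE-bdry-II} trivial at that point, or the maximum of $F$ is attained at an interior point of $M$, or else at $t_1=0$ where $F\equiv 0$. An alternative to this cut-off device is the conformal change of metric near $\partial M$ of Wang~\cite{FYWang}, which makes $\partial M$ convex at the cost of the same $H$- and $R$-dependent error terms; I would keep the cut-off version since it parallels the proof of Theorem~\ref{thmGEWBoundary-I} most closely.

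It then remains to run the interior computation at the (interior) maximum point $(x_1,t_1)$ of $F$: apply the Bochner formula to $f=|\nabla v|^2$, use $\Ric_M\ge -K$, substitute the evolution equation for $v$, and exploit $\nabla F=0$, $\Delta F\le 0$, $\partial_t F\ge 0$. The terms produced by $\Delta\rho$, $\varphi'/\varphi$ and $\varphi''/\varphi$ are precisely those assembling into $C_2$ (linear in the Laplacian-comparison bound, whence the factor $(n-1)H(3H+1)$) and $C_3$ (quadratic in $\varphi'/\varphi$ and $\varphi''/\varphi$), while $K$ and the $H_2$-type supremum form $C_1$; the coercive term $f^2/(2C)$ from the first paragraph controls the leading power. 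One is left with a quadratic inequality, of the schematic form $c\,F^2\le C\,(a_1 F^{3/2}+a_2 F)$ at $(x_1,t_1)$, with $a_1,a_2$ built from $C_1,C_2,C_3,R$ and $1/t_1$; solving it for $F(x_1,t_1)$, using the lower bound on $\varphi$ to pass from a bound on $F$ to one on $t\,f$ at an arbitrary point of $M$, and taking $t_1$ to be the time at which the estimate is wanted, yields \eqref{eqGE-bdry-II}. The companion estimate for \eqref{eqMainPDE-1} in Theorem~\ref{thmGEWBoundary-I} follows along the same lines with $v=2\sqrt u$ replaced by $w=\log u$, the $(1-\log u)$ factor being reinstated because there $u$ is only bounded from above.
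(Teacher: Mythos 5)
Your proposal follows essentially the same route as the paper: the authors dispose of Theorem \ref{thmGEWBoundary-II} by combining Lemma \ref{lemmaeqMainPDE-3} (applied to $w=|\nabla u^{1/2}|^2$, which is $\tfrac14|\nabla v|^2$ in your notation) with the boundary cut-off $\chi=(1+\phi)^2$ and the $F=t\chi w$ maximum-principle argument already set up for Theorem \ref{thmGEWBoundary-I}, and your reduction to $v=2\sqrt u$, the coercive term $2|\nabla v|^4/v^2\ge f^2/(2C)$, and the boundary contradiction via $\mathrm{II}\ge -H$ reproduce exactly those steps. One small slip: the index-comparison input for the distance $\rho$ to $\partial M$ (Warner/Kasue) is a \emph{lower} bound $\Delta\rho\ge -(n-1)(3H+1)$ on the collar, not an upper bound, since it is combined with $\varphi'\ge 0$ to bound $\Delta\varphi$ from below.
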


As occurred once, it is worth reminding that the right hand side of \eqref{eqGE-bdry-I} and \eqref{eqGE-bdry-II} are also different by $1/\sqrt u$.

Finally, we would like to mention that the strategy of our proofs basically follows those in \cite{Bai, Bri, Chen, SZ, Wang, Wu}. More precisely, first we estimate the lower bound of the heat-operator acting on suitable functions in term of the heat solution. Then, we use an appropriate cut-off function and the maximum principle to obtain the desired results. These methods are, loosely speaking, well-known and used in many works; for instance, see \cite{Bai, Bri, Chen, ChenSung, DK, HM, Ru, SZ, Wang, Wu} and the references therein. However, we also would like to emphasize that to obtain gradient estimates of the Einstein-scalar field Lichnerowicz equation, our approach is slightly different from those used before. In fact, instead of introducing a new function before using the Bochner techniques as in previous works, for example, $\log u$, $u^{1/3}$, we directly apply the Bochner--Weitzenb\"{o}ck formula to a suitable function. Then we make use of the maximum principle to prove our results. 

The paper is organized as follows. In Section \ref{sec-GE-I}, we provide a proof of Theorem \ref{thmGE-I}. Several applications of Theorem \ref{thmGE-I} such that Harnack-type inequalities and Liouville-type theorems for \eqref{eqMainPDE-1} are also considered in this section. In Section \ref{sec-GE-UnderRicciFlow}, we study gradient estimates of \eqref{eqMainPDE-1} under the Ricci flow and give a proof of Theorem \ref{thmGEUnderRicci-I}. In Section \ref{sec-GE-II}, we consider gradient estimates of \eqref{eqMainPDE-3} and prove Theorem \ref{thmGE-II}. Gradient estimates for \eqref{eqMainPDE-1} on compact Riemannian manifolds with non-convex boundary are given in Section \ref{sec-GE-bdry}.



\section{Gradient estimates for (\ref{eqMainPDE-1}): Proof of Theorem \ref{thmGE-I}}
\label{sec-GE-I}

\subsection{Basic lemmas}
\label{subsec-BasicLemmas1}

By now a standard routine, to prove Theorem $\ref{thmGE-I}$ we need two basic lemmas. First, we establish the following result.

\begin{lemma}\label{lemmamain}
Let $a, b, A, B, p, q$ be real numbers. Suppose that $u$ is a positive solution to the equation 
$$u_t=\Delta u+au\log u+bu+Au^p+Bu^{-q}$$
on $Q_{R, T}:=B(x_0, R)\times[t_0-T, t_0]$, where $x_0\in M$ is a fixed point, $R>0$, and $t_0\in\mathbb{R}$. Moreover, assume that $u\leqslant 1$. Let $h=\log u$ and $w=|\nabla\log(1-h)|^2$, then the following estimate
$$
\Delta_f w-w_t\geqslant -2[(n-1)K+H_1]w+2(1-h)w^2+\frac{2h}{1-h}\left\langle\nabla w, \nabla h\right\rangle
$$
holds on $Q_{R, T}$ where 
\begin{equation}\label{eqH_1}
H_1:=
\left\{
\begin{split}
&\max\{a+b,0\}+\\
&\sup\{Ap,A(p-1)u^{-1},Apu^{-1},0\}+\\
&\sup\{(-q-1)Bu^{-q-1},0\}
\end{split}
\right\}.
\end{equation}
\end{lemma}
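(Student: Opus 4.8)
The plan is to run the standard Bochner-type computation for the quantity $w = |\nabla \log(1-h)|^2$ where $h = \log u$, treating the $f$-Laplacian case so that the Bochner formula acquires the Bakry--\'Emery correction term. First I would record the PDE satisfied by $h$: since $u_t = \Delta u + a u\log u + b u + A u^p + B u^{-q}$ and $h = \log u$, dividing by $u$ gives
\[
h_t = \Delta h + |\nabla h|^2 + a h + b + A u^{p-1} + B u^{-q-1}.
\]
(For the $f$-version one would replace $\Delta$ by $\Delta_f$ throughout, but the stated lemma uses $\Delta$ with $\Delta_f w$ on the left, so I would be careful to track exactly which Laplacian appears where; the cleanest route is to work with $\Delta_f$ consistently and note $\Delta_f h = \Delta h + \langle \nabla f, \nabla h\rangle$.) Then set $\phi = \log(1-h)$, so $w = |\nabla \phi|^2$, and compute $\nabla \phi = -\nabla h/(1-h)$ and the evolution equation for $\phi$:
\[
\phi_t = \frac{-h_t}{1-h} = \Delta_f \phi + (1-h)|\nabla\phi|^2 + |\nabla\phi|^2 \cdot(\text{lower order}) - \frac{ah + b + Au^{p-1}+Bu^{-q-1}}{1-h},
\]
where I would expand $\Delta_f \phi$ in terms of $\Delta_f h$ and $|\nabla h|^2$ carefully. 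The key algebraic fact to exploit is that $\phi_t - \Delta_f \phi$ has a favorable sign structure, in particular the coefficient $(1-h) \geq 1$ multiplying $|\nabla \phi|^2 = w$.

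Next I would apply the Bochner--Weitzenb\"ock formula for the Bakry--\'Emery Laplacian,
\[
\tfrac12 \Delta_f |\nabla \phi|^2 = |\Hess \phi|^2 + \langle \nabla \phi, \nabla \Delta_f \phi\rangle + \Ric_f(\nabla\phi,\nabla\phi),
\]
and combine it with the time derivative $\tfrac12 w_t = \langle \nabla \phi, \nabla \phi_t\rangle$. Subtracting, the $\langle \nabla\phi, \nabla(\phi_t - \Delta_f\phi)\rangle$ term is handled by differentiating the evolution equation for $\phi$ obtained above; this produces the $\langle \nabla w, \nabla h\rangle$ cross term (with coefficient involving $h/(1-h)$ after simplification) and, crucially, a term of the form $+2(1-h)|\nabla\phi|^2 \cdot |\nabla\phi|^2 = +2(1-h)w^2$ coming from differentiating the $(1-h)|\nabla\phi|^2$ piece in the $\nabla\phi$ direction and using $\langle \nabla\phi, \nabla h\rangle = -(1-h)|\nabla\phi|^2$. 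The curvature hypothesis $\Ric_f \geq -(n-1)K$ disposes of the $\Ric_f(\nabla\phi,\nabla\phi)$ term, contributing $-(n-1)Kw$ (times $2$ after clearing the $\tfrac12$). The remaining genuinely new contributions, compared to the classical logarithmic-heat case, are the terms arising from differentiating $-\bigl(ah + b + Au^{p-1} + Bu^{-q-1}\bigr)/(1-h)$ in the $\nabla\phi$ direction; these must be bounded below by $-2 H_1 w$, and this is the main obstacle.

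The estimate of those lower-order terms is where the explicit form of $H_1$ in \eqref{eqH_1} comes from, and it is the step I expect to require the most care. One computes $\nabla\bigl[(ah+b+Au^{p-1}+Bu^{-q-1})/(1-h)\bigr]$ and pairs it with $\nabla\phi$; using $\nabla u = u\nabla h$ and $\nabla h = -(1-h)\nabla\phi$, each resulting scalar coefficient is of the shape (constant)$\times$(function of $u$) times $|\nabla\phi|^2 = w$, up to factors of $(1-h)$ and $h/(1-h)$ that need to be controlled using $0 < u \leq 1$, i.e.\ $h \leq 0$, $1 - h \geq 1$, and $h/(1-h) \in (-1,0]$. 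For instance the $au\log u$ piece contributes a coefficient comparable to $a + b$ (hence $\max\{a+b,0\}$ after taking the negative part), the $Au^p$ piece contributes coefficients like $Ap$, $A(p-1)u^{-1}$, $Apu^{-1}$ after differentiating $Au^{p-1}/(1-h)$ and regrouping (hence $\sup\{Ap, A(p-1)u^{-1}, Apu^{-1},0\}$), and the $Bu^{-q}$ piece contributes $(-q-1)Bu^{-q-1}$ (hence $\sup\{(-q-1)Bu^{-q-1},0\}$). Bounding every such term from below by $-H_1 w$ with $H_1$ the sum of these suprema, discarding the manifestly nonnegative $|\Hess\phi|^2$ term, and collecting the factor of $2$ yields precisely
\[
\Delta_f w - w_t \geq -2\bigl[(n-1)K + H_1\bigr]w + 2(1-h)w^2 + \frac{2h}{1-h}\langle \nabla w, \nabla h\rangle,
\]
which is the claim. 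The one subtlety worth double-checking in the write-up is the bookkeeping of which terms can legitimately be dropped (only $|\Hess\phi|^2 \geq 0$) versus which must be retained with the correct sign, and the precise sign of the $h/(1-h)$ coefficient on the cross term, since $h \leq 0$ makes that coefficient nonpositive but it is kept as an equality-producing term rather than estimated away.
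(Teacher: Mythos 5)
Your outline follows the paper's route: set $h=\log u$, $\phi=\log(1-h)$, apply the Bochner--Weitzenb\"{o}ck formula to $w=|\nabla\phi|^2$, combine with the evolution of $\phi$, discard $|\Hess\phi|^2\geqslant 0$, use $\Ric_f\geqslant -(n-1)K$, and bound the remaining scalar coefficients to produce $H_1$. However, the ``key algebraic fact'' you flag is misstated: the evolution equation is
\[
\phi_t-\Delta_f\phi=hw-\frac{ah+b+Au^{p-1}+Bu^{-q-1}}{1-h},
\]
so the coefficient multiplying $w$ there is $h\leqslant 0$, not $(1-h)\geqslant 1$ as you write. The positive factor $2(1-h)\geqslant 2$ on $w^2$ in the Lemma's conclusion does not live in the evolution equation itself; it emerges only after you differentiate the $hw$ term and pair with $\nabla\phi$. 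Indeed, since $\nabla(hw)=w\nabla h+h\nabla w$ and $\langle\nabla h,\nabla\phi\rangle=-(1-h)w$, the term $\langle\nabla\phi,\nabla(\Delta_f\phi-\phi_t)\rangle$ contributes precisely $(1-h)w^2+\tfrac{h}{1-h}\langle\nabla w,\nabla h\rangle$ plus the lower-order pieces that give $H_1$. So you get the right final term, but via a computation different from what your intermediate identity suggests; if you write it up with the sign as stated you will not be able to close the argument.

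With that corrected, the rest is bookkeeping: regrouping gives that the coefficient of $w$ (beyond the curvature term) is
\[
2\Big[-\frac{a+b}{1-h}+\Big(-p+\frac{-h}{1-h}\Big)Au^{p-1}+\Big(q+\frac{-h}{1-h}\Big)Bu^{-q-1}\Big],
\]
and each summand is bounded from below using $1-h\geqslant 1$, $-h/(1-h)\in[0,1)$, and $0<u\leqslant 1$, which is exactly where the four-case distinction for the $A$-term (depending on the signs of $A$ and $p-1$) and the three suprema in $H_1$ come from. Your prose gestures at this correctly but should be made explicit in the write-up.
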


\begin{proof}
Upon using $h=\log u$, by a simple computation, Eq. \eqref{eqMainPDE-1} becomes 
\begin{equation} 
h_t=\Delta_fh+|\nabla h|^2+ah+b+Au^{p-1}+Bu^{-q-1}.
\end{equation}
By the Bochner--Weitzenb\"ock formula, we have
$$
\Delta_f|\nabla \psi|^2\geqslant 2|\nabla^2 \psi|^2+2\Ric_f(\nabla \psi,\nabla \psi)+2\left\langle\nabla\Delta_f \psi\nabla \psi\right\rangle.
$$
for any function $\psi$. Therefore, under the assumption $\Ric_f\geqslant -2(n-1)K$, after choosing $\psi = \log (1-h)$, we deduce that
\begin{equation}\label{d22}
\Delta_fw\geqslant -2K(n-1)w+2\left\langle\nabla\Delta_f\log(1-h),\nabla\log(1-h)\right\rangle.
\end{equation} 
Keep in mind that $(\nabla h)/(1-h)=-\nabla \log (1-h)$ and $u \nabla h = \nabla u$. To go further, we need to estimate the right hand side of \eqref{d22}. Clearly, by the rule $\Delta_f \cdot = \Delta \cdot + \langle \nabla f  , \nabla \cdot \rangle$ we easily get
\begin{align*}
\Delta_f\log(1-h)&=-\frac{\Delta_fh}{1-h}-w, 
\end{align*}
which, after using the equation satisfied by $h$, gives
\[
\Delta_f\log(1-h) = \frac{ah+b+Au^{p-1}+Bu^{-q-1}}{1-h}+(\log(1-h))_t-hw.
\]
Moreover,
\[
\left\{
\begin{split}
\nabla \Big(\frac{ah}{1-h} \Big) &= -a \nabla \log (1-h) + \frac h{1-h} \nabla \log (1-h),\\
\nabla \Big(\frac{Au^{p-1}}{1-h} \Big) &= -A (p-1) u^{p-1} \nabla \log (1-h) + \frac {Au^{p-1}} {1-h} \nabla \log (1-h),\\
\nabla \Big(\frac{Bu^{-q-1}}{1-h} \Big) &= B (-q-1) u^{-q-1} \nabla \log (1-h) + \frac {Bu^{-q-1}} {1-h} \nabla \log (1-h).
\end{split}
\right.
\]
Using this, we can estimate $\left\langle\nabla\Delta_f\log(1-h),\nabla\log(1-h)\right\rangle$ as follows
\begin{equation}\label{d23}
\begin{split}
\big\langle\nabla\Delta_f \log &  (1-h),\nabla\log(1-h)\big\rangle \\
=&-\left(a+A(p-1)u^{p-1}+B(-q-1)u^{-q-1}\right)w + \frac{w_t}2 +(1-h)w^2 \\
&-\frac{ah+b+Au^{p-1}+Bu^{-q-1}}{1-h}w+\frac{h}{1-h}\left\langle\nabla w, \nabla h\right\rangle.
\end{split}
\end{equation}
Combining \eqref{d22} and \eqref{d23}, we obtain
\begin{equation}\label{de23}
\begin{split}
\Delta_fw-w_t \geqslant &-2\left\{
\begin{split}
&a+A(p-1)u^{p-1}+B(-q-1)u^{-q-1}+\\
&\frac{ah+b+Au^{p-1}+Bu^{-q-1}}{1-h}
\end{split}
\right\}w \\
&\quad +2(1-h)w^2+\frac{2h}{1-h}\left\langle\nabla w, \nabla h\right\rangle-2(n-1)Kw \\
=&2
\left\{
\begin{split}
&-\frac{a+b}{1-h} + \Big(-p+\frac{-h}{1-h}\Big)Au^{p-1} \\
& +\Big(q+\frac{-h}{1-h}\Big)Bu^{-q-1}
\end{split}
\right\}w \\
& +2(1-h)w^2+\frac{2h}{1-h}\left\langle\nabla w, \nabla h\right\rangle-2(n-1)Kw.
\end{split}
\end{equation}
To conclude the proof, we observe that
\[
-\frac{a+b}{1-h}\geqslant -\max\{a+b,0\},
\]
that
\[
\begin{split}
\Big(-p+\frac{-h}{1-h}\Big)Au^{p-1}\geqslant &
\begin{cases}
-Ap, &\text{ if } A\geqslant 0 \text{ and } p-1\geqslant 0,\\
0,& \text{ if }A<0 \text{ and }p-1\geqslant 0,\\
-Apu^{-1}, &\text{ if }A\geqslant 0 \text{ and } p-1<0,\\
-A(p-1)u^{-1}, &\text{ if }A<0 \text{ and }p-1<0,
\end{cases}\\
=&-\sup\{Ap, A(p-1)u^{-1}, Apu^{-1},0\},
\end{split}
\]
and that
\[
\Big(q+\frac{-h}{1-h}\Big)Bu^{-q-1}\geqslant -\sup\{B(-q-1)u^{-q-1},0\}.
\]
Putting these facts and \eqref{de23} together, we obtain 
$$
\Delta_f w-w_t\geqslant -2[(n-1)K+H_1]w+2(1-h)w^2+\frac{2h}{1-h}\left\langle\nabla w, \nabla h\right\rangle,
$$
which is the desired estimate.
\end{proof}

The next ingredient in the proof of Theorem \ref{thmGE-I} is the maximum principle. To apply the maximum principle, the following cut-off function will be used in our arguments.

\begin{lemma}[see \cite{SZ, Wu}]
\label{cutoff}
Fix $t_0\in\mathbb{R}$ and $T>0$. For any give $\tau\in (t_0-T,t_0]$, there exists a smooth function $\bar\psi:[0,+\infty)\times[t_0-T,t_0]\to\mathbb{R}$ satisfying following properties
\begin{enumerate}
\item [(i)] $0\leqslant \bar{\psi}(r,t)\leqslant 1$ 
in $[0,R]\times[t_0-T,t_0]$ and $\bar\psi$ is supported in a subset of $[0,R]\times[t_0-T,t_0]$;
\item [(ii)] $\bar{\psi}(r,t)=1$ and $\bar{\psi}_r(r,t)=0$
in $[0,R/2]\times[\tau,t_0]$ and $[0,R/2]\times[t_0-T,t_0]$, respectively;
\item [(iii)] $|\bar{\psi}_t|\leqslant C (\tau-t_0+T)^{-1}\bar\psi^{1/2} $
in $[0,+\infty)\times[t_0-T,t_0]$ for some $C>0$ and $\bar{\psi}(r,t_0-T)=0$ for all $r\in [0,+\infty)$;
\item [(iv)] $- C_\epsilon\bar{\psi}^{\epsilon} /R \leqslant \bar{\psi}_r\leqslant 0$ and $|\bar{\psi}_{rr}|\leqslant C_\epsilon\bar{\psi}^\epsilon /R^2$
in $[0,+\infty)\times[t_0-T,t_0]$ for every $\epsilon \in (0,1]$ with some constant $C_\epsilon$ depending on $\epsilon$.
\end{enumerate}
\end{lemma}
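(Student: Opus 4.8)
The statement to prove is Lemma~\ref{cutoff}, the existence of the Souplet--Zhang type space-time cut-off function. Since the excerpt itself flags this as a known result citing \cite{SZ, Wu}, the proof is a construction followed by routine verification of the four listed properties. The plan is to exhibit $\bar\psi$ explicitly as a product of a spatial factor and a temporal factor, $\bar\psi(r,t) = \phi(r)\eta(t)$ — actually, to get the precise form of property (iii) it is cleaner to build a single function $\bar\psi(r,t)$ directly, but the product ansatz already captures the essential idea and is what appears in \cite{SZ}.

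First I would fix a smooth, non-increasing function $\bar\theta:[0,+\infty)\to[0,1]$ with $\bar\theta\equiv 1$ on $[0,1/2]$, $\bar\theta\equiv 0$ on $[1,+\infty)$, and $-c_0 \leqslant \bar\theta' \leqslant 0$, $|\bar\theta''|\leqslant c_0$ for some universal $c_0$; then set $\phi(r) = \bar\theta(r/R)$, so that $\phi\equiv 1$ on $[0,R/2]$, $\phi$ is supported in $[0,R]$, $\phi_r(r,t)=0$ on $[0,R/2]$, and the derivative bounds $|\phi_r|\leqslant c_0/R$, $|\phi_{rr}|\leqslant c_0/R^2$ hold. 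The refinement in property (iv), namely $|\bar\psi_r|\leqslant C_\epsilon\bar\psi^\epsilon/R$ and $|\bar\psi_{rr}|\leqslant C_\epsilon\bar\psi^\epsilon/R^2$ for every $\epsilon\in(0,1]$, is obtained by choosing $\bar\theta$ so that near the boundary of its support it behaves like $(1-r)^k$ for $k$ large: if $\phi$ vanishes to sufficiently high order, then $|\phi'|^{1/\epsilon}$ and $|\phi''|^{1/\epsilon}$ are dominated by $\phi$ up to a constant depending on $k$ and $\epsilon$. I would record the standard fact that for any $\epsilon\in(0,1]$ one can pick $\bar\theta$ (depending on $\epsilon$, or uniformly by taking $k$ large) making this work; this is exactly the point at which the Souplet--Zhang cut-off improves on the classical Li--Yau one.

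Next I would choose the temporal factor. For the given $\tau\in(t_0-T,t_0]$, take a smooth non-decreasing $\eta:[t_0-T,t_0]\to[0,1]$ with $\eta(t_0-T)=0$, $\eta\equiv 1$ on $[\tau,t_0]$, and $|\eta'|\leqslant C/(\tau-t_0+T)$; since $\eta$ can be arranged to vanish to second order at $t_0-T$ one also gets $|\eta'|\leqslant C(\tau-t_0+T)^{-1}\eta^{1/2}$, which yields property (iii). Setting $\bar\psi(r,t)=\phi(r)\eta(t)$, properties (i)--(iv) follow: (i) and the support statement are immediate from $0\leqslant\phi,\eta\leqslant1$; (ii) holds because on $[0,R/2]\times[\tau,t_0]$ both factors are $\equiv 1$ and $\bar\psi_r=\phi_r\eta=0$ on $[0,R/2]\times[t_0-T,t_0]$; (iii) follows from $\bar\psi_t=\phi\eta'$, $\phi\leqslant1$, $\bar\psi^{1/2}=\phi^{1/2}\eta^{1/2}\geqslant\phi\cdot\eta^{1/2}$ is not quite right so instead one writes $|\bar\psi_t|=\phi|\eta'|\leqslant C(\tau-t_0+T)^{-1}\phi\,\eta^{1/2}\leqslant C(\tau-t_0+T)^{-1}\bar\psi^{1/2}$ using $\phi=\phi\cdot1\leqslant\phi^{1/2}$ and $\eta^{1/2}\leqslant 1$, hence $\phi\eta^{1/2}\leqslant\phi^{1/2}\eta^{1/2}=\bar\psi^{1/2}$; and (iv) follows from $\bar\psi_r=\phi_r\eta$, $\bar\psi_{rr}=\phi_{rr}\eta$ together with $|\phi_r|\leqslant C_\epsilon\phi^\epsilon/R\leqslant C_\epsilon\bar\psi^\epsilon/(R\eta^\epsilon)$ — here one needs $\eta$ bounded below, which fails near $t_0-T$; the clean fix, as in \cite{SZ}, is to absorb the $t$-dependence by instead defining $\bar\psi$ so that its spatial profile is multiplied through, i.e. one checks (iv) with $\bar\psi^\epsilon = \phi^\epsilon\eta^\epsilon$ and uses $|\phi_r|\eta \leqslant C_\epsilon\phi^\epsilon\eta/R \leqslant C_\epsilon(\phi\eta)^\epsilon/R = C_\epsilon\bar\psi^\epsilon/R$ since $\eta\leqslant\eta^\epsilon$ for $\epsilon\in(0,1]$. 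The same computation handles $\bar\psi_{rr}$.

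The main obstacle is not any single inequality but the bookkeeping needed to make a single function simultaneously satisfy the half-power bound in time (property (iii)) and the $\epsilon$-power bounds in space (property (iv)) without the two normalizations interfering — the resolution is the elementary observation that $s\leqslant s^\epsilon$ for $s\in[0,1]$ and $\epsilon\in(0,1]$, which lets the temporal factor be freely traded between the two sides. Since the construction and these verifications are entirely standard and reproduced in \cite{SZ} (see also \cite{Wu}), I would state the lemma with a reference and, if a self-contained argument is wanted, include only the explicit choice of $\phi$ and $\eta$ above together with the one-line derivations of (i)--(iv) just indicated.
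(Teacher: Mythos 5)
The paper offers no proof of this lemma, deferring entirely to the cited references \cite{SZ, Wu}; your product construction $\bar\psi(r,t)=\phi(r)\eta(t)$ with the elementary observations $\phi\leqslant\phi^{1/2}$ and $\eta\leqslant\eta^{\epsilon}$ for $\phi,\eta\in[0,1]$, $\epsilon\in(0,1]$, is exactly the standard argument there and is essentially correct. Two small points to tidy up. First, you say the inequality $\bar\psi^{1/2}=\phi^{1/2}\eta^{1/2}\geqslant\phi\,\eta^{1/2}$ ``is not quite right'' and then re-derive the identical inequality a line later; it was right the first time, since $\phi^{1/2}\geqslant\phi$ on $[0,1]$, so the digression should be cut. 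Second, your parenthetical ``uniformly by taking $k$ large'' is not quite accurate at the endpoint: no smooth, compactly supported spatial profile can satisfy $|\phi_r|\leqslant C\phi/R$ at the boundary of its support (that would force $\phi$ to be comparable to an exponential, which never vanishes), so the bound in (iv) at $\epsilon=1$ is unattainable by any single fixed $\bar\psi$. Taking $\bar\theta$ to vanish to infinite order (e.g.\ of the form $e^{-1/(1-s)}$ near $s=1$) does give a single function for which (iv) holds for every $\epsilon\in(0,1)$ simultaneously, and that is all that is ever used in the paper (the proofs invoke only $\epsilon=1/2$ and $\epsilon=3/4$); the ``$\epsilon\in(0,1]$'' in the lemma statement should be read as $\epsilon\in(0,1)$ or, equivalently, as allowing $\bar\psi$ to depend on $\epsilon$.
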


\subsection{Proof of Theorem \ref{thmGE-I}}

Recall that $u$ solves the following equation
$$u_t=\Delta_fu+au\log u+bu+Au^p+Bu^{-q}.$$
Now, we prove Theorem \ref{thmGE-I}. By Lemma \ref{lemmamain}, we have the estimate
\begin{equation}\label{dung1}
\Delta_f w-w_t\geqslant -2[(n-1)K+H_0]w+2(1-h)w^2+\frac{2h}{1-h}\left\langle\nabla w, \nabla h\right\rangle,
\end{equation}
where we denote $h=\log u$ and $w=|\nabla\log(1-h)|^2$. Notice that under the circumstance $A\leqslant 0$, $B\geqslant 0$, $p\geqslant 1$, and $q\geqslant 0$, we know that
\[
H_1=H_0=\max\{0, a+b\}
\]
where the constant $H_1$ is already given in \eqref{eqH_1}. Furthermore, the desired gradient estimate $|\nabla u|/u \leqslant C(n,\alpha,R, t, t_0, K, H_0) (1-\log u)$ is equivalent to the estimate $\sqrt{w} \leqslant C(n,\alpha,R, t, t_0, K, H_0)$. Hence, to realize the theorem, it suffices to bound $w^2$ appropriately from above.

With each fixed time $\tau\in (t_0-T,t_0]$, we choose a cut-off function $\bar\psi(r,t)$ satisfying all conditions in Lemma \ref{cutoff}. To conclude the theorem, we will prove that the inequality \eqref{eqGE-I} holds at every point $(x,t)$ in $Q_{R/2,T}$. To this purpose, we first transform the cut-off function $\overline\psi$ to a new cut-off function attached with $M$. Indeed, let us define the function $\psi:M\times[t_0-T,t_0]\to \mathbb{R}$ given by
$$
\psi (x,t) =\bar\psi(d(x,x_0),t)
$$
where $x_0\in M$ is a fixed point given in the statement of the theorem. Let $(x_1,t_1)$ be a maximum point of $\psi w$ in the close set 
$$\{(x, t)\in M\times[t_0-T, \tau]: d(x, x_0)\leqslant R\}.$$
We may assume that $(\psi w)(x_1, t_1)>0$; otherwise, it follows from $(\psi w)(x_1, t_1)\leqslant 0$ that $(\psi w)(x, \tau)\leqslant 0$ for all $x\in M$ such that $d(x, x_0)\leqslant R$. However, by the definition of $\psi$, we have $\psi(x, \tau)\equiv 1$ for all $x\in M$ satisfying $d(x, x_0)\leqslant R/2$. This implies that $w(x,\tau)\leqslant 0$ when $d(x, x_0)\leqslant R/2$. Since $\tau$ is arbitrary, we conclude that \eqref{eqGE-I} holds on $Q_{R/2, T}$. Note that, according to the standard argument of Calabi \cite{Calabi}, we may also assume that $(\psi w)$ is smooth at $(x_1, t_1)$. 

Obviously at $(x_1,t_1)$, we have the following facts: $\nabla(\psi w)=0$, $\Delta_f(\psi w)\leqslant 0$, and $(\psi w)_t\geqslant 0$. Hence, still being at $(x_1, t_1)$, we get
\[
\begin{split}
0 \geqslant &\Big(\Delta_f-\frac{\partial}{\partial t}\Big)(\psi w)-\Big\langle\frac{2h}{1-h}\nabla h+2\frac{\nabla\psi}{\psi},\nabla(\psi w)\Big\rangle\\
=&\psi(\Delta_fw-w_t)+w(\Delta_f\psi-\psi_t)+2\langle\nabla\psi,\nabla w\rangle \\
&-\frac{2h}{1-h}\left\langle\nabla h, \nabla w\right\rangle\psi-\frac{2h}{1-h}\left\langle\nabla h, \nabla\psi\right\rangle w-2\frac{|\nabla\psi|^2}{\psi}w-2\left\langle\nabla\psi, \nabla w\right\rangle.
\end{split}
\]
Making use of \eqref{dung1}, we further obtain
\[
\begin{split}
0\geqslant &-2[(n-1)K+H_0](\psi w)+2\psi(1-h)w^2\\
&-\frac{2h}{1-h}\left\langle \nabla\psi,\nabla h\right\rangle w +w\Delta_f\psi-w\psi_t-2\frac{|\nabla \psi|^2}{\psi}w
\end{split}
\]
at $(x_1,t_1)$. In other words, we have just proved that
\begin{equation}\label{24}
\begin{split}
2\psi(1-h)w^2\leqslant & 2[(n-1)K+H_0]\psi w+\frac{2h}{1-h}\left\langle\nabla \psi,\nabla h\right\rangle w \\
&-w\Delta_f\psi+w\psi_t+2\frac{|\nabla\psi|^2}{\psi}w
\end{split}
\end{equation}
at $(x_1,t_1)$. We have two possible cases.

\medskip\noindent\textbf{Case 1}. If $x_1\in B(x_0,R/2)$, then for each fixed $\tau \in (t_0-T, t_0]$, there holds $\psi (\cdot, \tau) \equiv 1$ everywhere on the spacelike in $B(x_0,R/2)$ by the definition of $\psi$. By \eqref{24}, we infer 
$$
2\psi w^2\leqslant 2\psi(1-h)w^2\leqslant 2[(n-1)K+H_0]\psi w+w\psi_t
$$ 
at $(x_1,t_1)$. For arbitrary $x \in B(x_0, R/2)$, we observe that
\begin{align*} 
w(x, \tau) &=(\psi w)(x, \tau)\leqslant (\psi w)(x_1, t_1)\leqslant (\psi^{1/2}w)(x_1,t_1) \\
&\leqslant [(n-1)K+H_0]\psi^{1/2}\big|_{(x_1,t_1)}+\frac{\psi_t}{2\psi^{1/2}}\Big|_{(x_1,t_1)}\\ 
&\leqslant [(n-1)K+H_0]+ C (\tau-t_0 +T)^{-1},
\end{align*}
thanks to Lemma \ref{cutoff}(ii). Since $\tau$ can be arbitrarily chosen, we complete the proof of \eqref{eqGE-I} in this case.

\medskip\noindent\textbf{Case 2}. Suppose that $x_1 \notin B(x_0,R/2)$ where $R\geqslant 2$. From now on, we use $c$ to denote a constant depending only on $n$ whose value may change from line to line. Since $\Ric_f\geqslant -(n-1)K$ and $r(x_1,x_0)\geqslant 1$ in $B(x_0,R)$, we can apply the $f$-Laplacian comparison theorem in \cite{Bri} to get
\begin{equation}
\Delta_fr(x_1)\leqslant \alpha +(n-1)K(R-1), 
\end{equation}
where $\alpha:=\max_{x\in B(x_0,1)}\Delta_fr(x)$. Using the $f$-Laplacian comparison theorem again and thanks to Lemma \ref{cutoff}, we first have
\begin{equation}\label{25}
\begin{split}
-w\Delta_f\psi=&-w\big[\psi_r\Delta_fr+\psi_{rr}|\nabla r|^2\big] \\
=&w(-\psi_r)\Delta_fr-w\psi_{rr} \\
\leqslant &-w\psi_r\big(\alpha+(n-1)K(R-1)\big)-w\psi_{rr} \\
\leqslant &w\psi^{1/2}\frac{|\psi_{rr}|}{\psi^{1/2}}+|\alpha|\psi^{1/2}w\frac{|\psi_r|}{\psi^{1/2}}+\frac{(n-1)K(R-1)|\psi_r|}{\psi^{1/2}}\psi^{1/2}w \\
\leqslant &\frac{1}{8}\psi w^2+c\Big[\Big(\frac{|\psi_{rr}|}{\psi^{1/2}}\Big)^2+\Big(\frac{|\alpha||\psi_rw|}{\psi^{1/2}}\Big)^2+\Big(\frac{K(R-1)|\psi_r|}{\psi^{1/2}}\Big)^2\Big] \\
\leqslant &\frac{1}{8}\psi w^2+c\Big(\frac{1}{R^4}+\frac{|\alpha|^2}{R^2}+K^2\Big).
\end{split}
\end{equation}
On the other hand, by the Young inequality, we have
\begin{equation}\label{26}
\begin{split}
\frac{2h}{1-h}\left\langle\nabla\psi,\nabla h\right\rangle w 
\leqslant  &2\big[\psi(1-h)w^2\big]^{3/4}\frac{|h||\nabla\psi|}{\big[\psi(1-h)\big]^{3/4}} \\
\leqslant &\psi(1-h)w^2+c\frac{(h|\nabla\psi|)^4}{\big[\psi(1-h)\big]^3} \\
\leqslant &\psi(1-h)w^2+c\frac{h^4}{R^4(1-h)^3}. 
\end{split}
\end{equation}
By using the Cauchy--Schwarz inequality several times, we easily obtain the following estimates: first for $\psi w$
\[
2[(n-1)K+H_0]\psi w\leqslant \frac{1}{8}\psi w^2+ c(K^2+H_0^2). 
\]
then for $w\psi_t$ as follows
\[\begin{split}
w\psi_t=\psi^{1/2}w\frac{\psi_t}{\psi^{1/2}}&\leqslant \frac{1}{8}\psi w^2+c\Big(\frac{\psi_t}{\psi^{1/2}}\Big)^2 \leqslant \frac{1}{8}\psi w^2+\frac{c}{(\tau -t_0+T)^2}. 
\end{split}\]
and finally for $|\nabla\psi|^2 w/\psi$ as the following
\[\begin{split}
2\frac{|\nabla\psi|^2}{\psi}w
=2\psi^{1/2}w\frac{|\nabla\psi|^2}{\psi^{3/2}}&\leqslant \frac{1}{8}\psi w^2+c\Big(\frac{|\nabla\psi|^2}{\psi^{3/2}}\Big)^2 \leqslant \frac{1}{8}\psi w^2+\frac{c}{R^4}. 
\end{split}\]
Now, we combine \eqref{24}--\eqref{26} and all above three estimates to get
\[\begin{split}
2 \psi(1-h)w^2\leqslant & \psi(1-h)w^2+c \left\{
\begin{split}
&\frac{h^4}{R^4(1-h)^3}+K^2+H_0^2+\frac{1}{R^4} \\
&+\frac{\alpha^2}{R^2}+\frac{1}{(\tau-t_0+T)^2}
\end{split}
\right\} +\frac{1}{2}\psi w^2.
\end{split}\]
Since $1-h\geqslant 1$, this inequality implies 
\begin{align*}
\psi w^2\leqslant c\Big(\frac{h^4}{R^4(1-h)^4}+K^2+H_0^2+\frac{1}{R^4}+\frac{\alpha^2}{R^2}+\frac{1}{(\tau -t_0+T)^2}\Big).
\end{align*} 
The finally, since $\psi(\cdot, \tau) \equiv 1$ in $B(x_0,R/2)$ and $h^4/(1-h)^4\leqslant 1$, we obtain
$$
w^2(x, \tau)\leqslant (\psi w)^2(x_1,t_1)\leqslant \psi w^2(x_1,t_1)\leqslant c\Big(\frac{1}{R^4}+\frac{\alpha^2}{R^2}+\frac{1}{(\tau -t_0+T)^2}+K^2+H_0^2\Big).
$$
for all $x\in B(x_0, R/2)$. Since $\tau$ is arbitrary, this also completes the proof of \eqref{eqGE-I} in this case.

\subsection{Some applications and remarks}

This subsection is devoted for several applications of Theorem \ref{thmGE-I}. First, we obtain the following Harnack-type inequality for positive solutions of \eqref{eqMainPDE-1}

\begin{corollary}[Harnack-type inequality]\label{Harnack}
Let $(M,g,e^{-f}dv)$ be an $n$-dimensional complete smooth metric measure space with $\Ric_f\geqslant -(n-1)K$ for some constant $K\geqslant 0$ in $B(x_0,R)$, fixed $x_0$ in $M$ and $R\geqslant 2$. Assume that $u$ is a positive solution to equation \eqref{eqMainPDE-1} with $u\leqslant 1$ and $a\leqslant 0, b\leqslant 0, A\leqslant 0, B\geqslant 0, p\geqslant 1, q\geqslant 0$. Suppose that $\rho=\rho(x_1,x_2)$ is the geodesic distance between $x_1$ and $x_2$ for all $x_1,x_2$ in $M$, we have
$$
u(x_2,t)\leqslant u(x_1,t)^\beta e^{1-\beta}
$$
where $\beta=\exp (- c(n)\rho (t-t_0+T)^{-1/2}-c(n)\sqrt{K}\rho )$ and $c(n)$ is a constant depending only on$n$.
\end{corollary}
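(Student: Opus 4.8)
The plan is to derive the Harnack inequality directly from the gradient estimate \eqref{eqGE-I} by integrating along a minimizing geodesic joining $x_1$ and $x_2$ at a fixed time $t$. Since under the hypotheses $a\leqslant 0$, $b\leqslant 0$ we have $H_0=\max\{0,a+b\}=0$, and since we may take $R$ arbitrarily large (letting $R\to+\infty$ so that the terms $(1+\sqrt{|\alpha|R})/R$ tend to $0$, using completeness so $B(x_0,R)=M$ eventually), the estimate \eqref{eqGE-I} simplifies to
\[
\frac{|\nabla u|}{u}\leqslant c(n)\Big(\frac{1}{\sqrt{t-t_0+T}}+\sqrt{K}\Big)(1-\log u)
\]
on all of $M$. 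The key observation is that $|\nabla u|/u = |\nabla \log u|$ and $(1-\log u)>0$ since $u\leqslant 1$, so dividing gives a bound on $|\nabla \log(1-\log u)|$, which is exactly the quantity whose gradient is controlled.

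First I would fix $t$ and let $\gamma:[0,1]\to M$ be a minimal geodesic with $\gamma(0)=x_1$, $\gamma(1)=x_2$, parametrized proportionally to arclength so that $|\dot\gamma|\equiv\rho$. Setting $\phi(s)=1-\log u(\gamma(s),t)>0$, the chain rule and the simplified gradient estimate yield
\[
\Big|\frac{d}{ds}\log \phi(s)\Big| = \frac{|\langle \nabla u,\dot\gamma\rangle|}{u\,\phi}\leqslant \frac{|\nabla u|}{u\,\phi}\,\rho \leqslant c(n)\Big(\frac{1}{\sqrt{t-t_0+T}}+\sqrt{K}\Big)\rho.
\]
Integrating this over $s\in[0,1]$ and exponentiating gives
\[
\log\phi(0) \leqslant \log\phi(1) + c(n)\Big(\frac{\rho}{\sqrt{t-t_0+T}}+\sqrt{K}\,\rho\Big),
\]
that is, $1-\log u(x_1,t) \leqslant \big(1-\log u(x_2,t)\big)\,\beta^{-1}$ where $\beta = \exp\big(-c(n)\rho(t-t_0+T)^{-1/2}-c(n)\sqrt{K}\,\rho\big)\in(0,1]$. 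Rearranging, $\beta\big(1-\log u(x_1,t)\big)\leqslant 1-\log u(x_2,t)$, hence $\log u(x_2,t)\leqslant 1-\beta+\beta\log u(x_1,t)$, and exponentiating produces $u(x_2,t)\leqslant u(x_1,t)^\beta e^{1-\beta}$, as claimed. (Swapping the roles of $x_1$ and $x_2$ is harmless since $\rho$ is symmetric.)

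I do not anticipate a serious obstacle here; the argument is the standard integration-along-geodesics trick once \eqref{eqGE-I} is in hand. The only point requiring a little care is the passage $R\to+\infty$: one must invoke completeness to ensure $\Ric_f\geqslant-(n-1)K$ holds on all balls and that $\alpha$ (which is defined via $B(x_0,1)$ and hence fixed independent of $R$) does not interfere, so that $(1+\sqrt{|\alpha|R})/R\to 0$; this legitimately kills the $R$-dependent terms. A second minor subtlety is that $\gamma$ should lie in a region where $u$ is smooth and the estimate applies — on a complete manifold with the curvature bound holding globally this is automatic. Everything else is elementary calculus.
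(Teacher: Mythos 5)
Your proof is correct and follows essentially the same approach as the paper: simplify the gradient estimate \eqref{eqGE-I} by noting $H_0=0$ and letting $R\to\infty$, then integrate $|\nabla\log(1-\log u)|$ along a minimizing geodesic and exponentiate. The only superficial differences are the orientation of the geodesic parametrization and your slightly more careful use of absolute values in the integrand; both are immaterial.
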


\begin{proof}
By the estimates \eqref{eqGE-I}, let $R$ tends to infinity, we have
$$
\frac{|\nabla u|}{u(1-\log u)}\leqslant c(n)\Big(\frac{1}{\sqrt{t-t_0+T}}+\sqrt{K}\Big).
$$
Let $\gamma: [0,1]\to M$ be a minimal geodesic joining $x_1$ and $x_2$ satisfying $\gamma(0)=x_2$ and $\gamma(1)=x_1$. Then 
\[\begin{split}
\log\frac{1-h(x_1,t)}{1-h(x_2,t)}=&\int_0^1\frac{d\log(1-h(\gamma(s),t))}{ds}ds \\
\leqslant &\int_0^1|\dot{\gamma}|\frac{|\nabla u|}{u(1-\log u)}ds \\
\leqslant &c(n)\Big(\frac{1}{\sqrt{t-t_0+T}}+\sqrt{K}\Big)\rho. 
\end{split}\]
Let $\beta=\exp\big(- c(n)\rho (t-t_0+T)^{-1/2} -c(n)\sqrt{K}\rho\big)$, we have
$$
\frac{1-h(x_1,t)}{1-h(x_2,t)}\leqslant \frac{1}{\beta}.
$$
Therefore, by some easy calculations, it is not hard to see that
$$u(x_2,t)\leqslant u(x_1,t)^\beta e^{1-\beta}.
$$
The proof is complete.
\end{proof}

Using Theorem \ref{thmGE-I}, we can also obtain an Liouville-type result for positive solutions of Yamabe-type equations of the form \eqref{eqYamabeType} below.

\begin{corollary}[Liouville-type result for Yamabe-type equations]\label{cor1}
Let $(M,g,e^{-f}dv)$ be an $n$-dimensional complete smooth metric measure space with $\Ric_f \geqslant 0$. Suppose that $b, A, p$ are constants satisfying $b\leqslant 0, A\leqslant 0, p\geqslant 1$. Then any smooth, positive, bounded solution $u$ of
\begin{equation} \label{eqYamabeType}
\Delta u+bu+Au^p=0
\end{equation}
must be constant.
\end{corollary}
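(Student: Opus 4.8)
The plan is to regard a bounded solution $u$ of the Yamabe-type equation \eqref{eqYamabeType} as a time-independent solution of the $f$-heat equation \eqref{eqMainPDE-1} with $a=0$ and $B=0$, and then to let the gradient estimate of Theorem \ref{thmGE-I} run to infinity. First I would normalize. Since $u$ is bounded there is a constant $C_0>0$ with $0<u<C_0$ on $M$, so $v:=u/C_0$ satisfies $0<v<1$ and, being stationary, solves
\[
v_t=\Delta_f v+bv+\big(AC_0^{p-1}\big)v^p
\]
on every parabolic cylinder $Q_{R,T}$. As $A\leqslant 0$ and $C_0>0$, the new leading coefficient $AC_0^{p-1}$ is still $\leqslant 0$, while $b\leqslant 0$ is unchanged. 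Hence $v$ is an admissible solution in the sense of Theorem \ref{thmGE-I}, with parameters $a=0$, $b\leqslant 0$, $A'=AC_0^{p-1}\leqslant 0$, $B=0\geqslant 0$, $p\geqslant 1$, $q=0\geqslant 0$.

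Next I would invoke Theorem \ref{thmGE-I}. Since $\Ric_f\geqslant 0$ we take $K=0$, and since $a=0$, $b\leqslant 0$ we get $H_0=\max\{a+b,0\}=0$. Fix an arbitrary point $x\in M$, choose the base point $x_0=x$, put $t_0=0$, and apply the theorem on $Q_{R,T}=B(x,R)\times[-T,0]$ for arbitrary $R\geqslant 2$, $T>0$. Evaluating \eqref{eqGE-I} at $(x,0)\in Q_{R/2,T}$ (here $t=0\neq -T=t_0-T$) and using $K=H_0=0$ gives
\[
\frac{|\nabla v|(x)}{v(x)}\leqslant c(n)\Big(\frac{1+\sqrt{|\alpha|R}}{R}+\frac{1}{\sqrt{T}}\Big)\big(1-\log v(x)\big),
\]
where $\alpha=\max_{y\in B(x,1)}\Delta_f r(y)$ depends only on $x$, not on $R$ or $T$.

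Finally I would pass to the limit. Letting $R\to\infty$, the term $\frac{1+\sqrt{|\alpha|R}}{R}=\frac1R+\sqrt{|\alpha|/R}$ tends to $0$, so the estimate reduces to $|\nabla v|(x)/v(x)\leqslant c(n)\,T^{-1/2}\big(1-\log v(x)\big)$; then letting $T\to\infty$, and noting that $1-\log v(x)$ is a fixed finite number because $v(x)>0$, I conclude $|\nabla v|(x)=0$. As $x\in M$ was arbitrary, $\nabla v\equiv 0$, so $v$, and therefore $u$, is constant. Alternatively, one may cite Corollary \ref{Harnack}, whose hypotheses $a=0\leqslant 0$ and $b\leqslant 0$ hold here: with $K=0$ its exponent $\beta=\exp\big(-c(n)\rho(t-t_0+T)^{-1/2}\big)\to 1$ as $T\to\infty$, forcing $u(x_2)\leqslant u(x_1)$ for all $x_1,x_2\in M$, hence $u$ constant by symmetry.

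I do not expect a real obstacle; the one point deserving care is the role of $\alpha$, which is pinned to the base point of the cylinder rather than to its radius. Thus one must fix $x_0=x$ first and only then send $R\to\infty$, so that $\alpha=\alpha(x)$ stays bounded and is killed in the limit. It is also worth verifying that the normalization preserves every sign condition of Theorem \ref{thmGE-I} — it does, because rescaling multiplies $A$ by the positive number $C_0^{p-1}$ — and that replacing the elliptic equation by the associated $f$-heat equation is harmless since a stationary function has $u_t\equiv 0$.
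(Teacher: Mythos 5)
Your argument is correct and follows the paper's own route essentially step for step: specialize Theorem~\ref{thmGE-I} to the stationary setting with $a=0$, $B=0$, note that the sign hypotheses force $H_0=0$ and $\Ric_f\geqslant 0$ gives $K=0$, and then let the radius and the time horizon go to infinity in \eqref{eqGE-I} to kill the right-hand side. You are slightly more careful than the printed proof in two places — you make the normalization $v=u/C_0$ explicit so that $v\in(0,1]$ as Theorem~\ref{thmGE-I} requires, and you observe that $\alpha$ is attached to the base point $x_0$ rather than to $R$, so the base point must be fixed before passing to the limit — but these refinements only tighten the same argument rather than change it.
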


\begin{proof}
By the assumption on $a$, $A$, and $p$, we have $H_0=0$ in Theorem \ref{thmGE-I}. Moreover, since $u$ does not depend on $t$, let $t$ tends to $\infty$, then let $R\to+\infty$ in the estimates \eqref{eqGE-I}, we obtain $|\nabla u|/u \leqslant 0$. This implies that $u$ is a constant function. The proof is complete.
\end{proof}

In the next application, we also obtain an Liouville-type result for positive solutions of Lichnerowicz-type equations of the form \eqref{eqLichnerowiczType} below.

\begin{corollary}[Liouville-type result for Lichnerowicz-type equations]\label{cor2}
Let $(M,g,e^{-f}dv)$ be an $n$-dimensional complete smooth metric measure space with $\Ric_f\geqslant 0$. Suppose that $u$ is a smooth solution to the Lichnerowicz equation 
\begin{equation} \label{eqLichnerowiczType}
\Delta_f u+bu+Au^p+Bu^{-q}=0,
\end{equation}
where $b, A, B, p, q$ are constants satisfying $b\leqslant 0, A\leqslant 0, B\geqslant 0, p\geqslant 1, q\geqslant 0$. If $u$ is bounded and positive, then $u$ is constant.
\end{corollary}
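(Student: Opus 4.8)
The plan is to read the result off from Theorem \ref{thmGE-I}, applied to $u$ regarded as a time-independent solution of \eqref{eqMainPDE-1} with $a=0$ (there is no $u\log u$ term in \eqref{eqLichnerowiczType}). Two preliminary reductions will be needed. First, since $u$ is bounded and positive, I would pick $C>0$ with $0<u<C$ on $M$ and set $v=u/C$; with $a=0$, the scaling discussion in the Introduction shows that $v$ solves
\[
\Delta_f v+bv+(AC^{p-1})\,v^{p}+(BC^{-q-1})\,v^{-q}=0,
\]
and one checks that the sign conditions persist, namely $b\leqslant 0$, $AC^{p-1}\leqslant 0$, $BC^{-q-1}\geqslant 0$, while $p\geqslant 1$ and $q\geqslant 0$ are unchanged; moreover $0<v<1$ and $v$ is constant if and only if $u$ is. So it suffices to treat the case $0<u\leqslant 1$. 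Second, for any fixed $x_0\in M$, any $R\geqslant 2$, any $t_0\in\mathbb{R}$, and any $T>0$, the function $u$ is (trivially, since $u_t\equiv 0$) a smooth solution of \eqref{eqMainPDE-1} on the cylinder $Q_{R,T}=B(x_0,R)\times[t_0-T,t_0]$.

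Next I would invoke Theorem \ref{thmGE-I}. The hypothesis $\Ric_f\geqslant 0$ gives $K=0$, and $a=0$ together with $b\leqslant 0$ yields $H_0=\max\{a+b,0\}=0$. Evaluating \eqref{eqGE-I} at the slice $t=t_0$, so that $t-t_0+T=T$, one gets at every point of $B(x_0,R/2)$
\[
\frac{|\nabla u|}{u}\leqslant c(n)\Big(\frac{1+\sqrt{|\alpha|R}}{R}+\frac{1}{\sqrt{T}}\Big)(1-\log u),
\]
where $\alpha=\max_{x\in B(x_0,1)}\Delta_f r(x)$ is a fixed constant not depending on $R$ or $T$.

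Finally I would let $T\to\infty$ and then $R\to\infty$. The left-hand side and the factor $1-\log u$ do not involve $T$ or $R$, while $1/\sqrt{T}\to 0$ and
\[
\frac{1+\sqrt{|\alpha|R}}{R}\leqslant \frac1R+\frac{\sqrt{|\alpha|}}{\sqrt{R}}\longrightarrow 0.
\]
Hence $|\nabla u|/u\leqslant 0$ everywhere on $M$, so $\nabla u\equiv 0$ and $u$ is constant. I do not expect any genuine obstacle here; the only points needing care are verifying that the normalization $v=u/C$ preserves every sign hypothesis (which is precisely where it matters that $a=0$, so that $b$ is left untouched by the shift $b\mapsto b+a\log C$) and that the geometric term $\alpha$ remains bounded as $R\to\infty$, so that the first term on the right-hand side genuinely decays to zero.
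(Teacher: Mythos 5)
Your proof is correct and takes essentially the same approach as the paper: apply Theorem \ref{thmGE-I} with $K=0$, observe $H_0=\max\{a+b,0\}=0$, and then send $T\to\infty$ and $R\to\infty$ in \eqref{eqGE-I}. You are more explicit than the paper about two details it leaves implicit, namely the normalization $v=u/C$ needed to place the solution in the range $(0,1]$ required by Theorem \ref{thmGE-I} (and the check that this rescaling preserves the sign hypotheses precisely because $a=0$), and the observation that $\alpha$ depends only on $B(x_0,1)$ so the term $(1+\sqrt{|\alpha| R})/R$ really does vanish as $R\to\infty$.
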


\begin{proof}
We argue as the proof of Corollary \ref{cor1}, the assumption on $b$, $A$, $B$, $p$, and $q$ implies that $H_0=0$. Moreover, since $u$ does not depend on $t$, first we send $t$ to $+\infty$, then send $R$ to $+\infty$ in the estimates \eqref{eqGE-I}, eventually, we arrive at the estimate $|\nabla u|/u\leqslant 0$. This implies that $u$ is a constant function. The proof is complete.
\end{proof}

Similarly, suppose that $p\geqslant 1, q\geqslant 0, A\leqslant 0, B\geqslant 0$ and $a+b\leqslant 0$, we obtain the following result which can be considered as a generalization of Theorem 1.2 in \cite{SZ}. We omit the detail of proof.

\begin{corollary}\label{SZ1}
Let $(M, g, e^{-f}dv)$ be a complete, non-compact smooth metric measure space with non-negative Bakry--\'{E}mery curvature. Let $u$ be a strictly positive ancient solution to the heat equation \eqref{eqMainPDE-1} in the sense that it is a solution defined in all space and negative time. We assume in addition that $u$ has the following asymptotic behavior
\[
u(x, t)=\exp (o(\rho(x)+ \sqrt{|t|}) )
\]
near infinity, where $\rho(x)$ is the distance from $x$ to a fixed point $x_0\in M$. Then $u$ is a constant
\end{corollary}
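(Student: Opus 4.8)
The plan is to apply Theorem \ref{thmGE-I} to the ancient solution $u$ and exploit the prescribed asymptotic behavior to kill every term on the right-hand side of \eqref{eqGE-I}. Since $\Ric_f \geqslant 0$ we may take $K = 0$, and since $a + b \leqslant 0$ together with $A \leqslant 0$, $B \geqslant 0$, $p \geqslant 1$, $q \geqslant 0$ we have $H_0 = \max\{a+b,0\} = 0$. The estimate \eqref{eqGE-I} then reads, for any ball $B(x_0,R)$ and any cylinder $Q_{R,T} = B(x_0,R) \times [t_0 - T, t_0]$ on which $u$ is defined,
\[
\frac{|\nabla u|(x,t)}{u(x,t)} \leqslant c(n) \Big( \frac{1 + \sqrt{|\alpha| R}}{R} + \frac{1}{\sqrt{t - t_0 + T}} \Big) \big(1 - \log u(x,t)\big)
\]
on $Q_{R/2,T}$ with $t \neq t_0 - T$.

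First I would fix a point $x \in M$ and a time $t < 0$, and because $u$ is ancient (defined for all negative time and all of space) I am free to choose $t_0 = 0$ and let both $R \to \infty$ and $T \to \infty$ independently. Choosing, say, $R = T$ and letting $T \to \infty$, the term $1/\sqrt{t - t_0 + T} = 1/\sqrt{t + T} \to 0$; for the spatial term one must be slightly more careful because of the factor $\sqrt{|\alpha| R}/R = \sqrt{|\alpha|/R}$ — here $\alpha = \max_{x \in B(x_0,1)} \Delta_f r(x)$ is a fixed finite constant independent of $R$, so this term is $O(R^{-1/2}) \to 0$ as well, and $1/R \to 0$ trivially. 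Hence the prefactor tends to $0$.

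The one subtlety is that the right-hand side also carries the factor $(1 - \log u(x,t))$, which is not a priori bounded. This is where the asymptotic hypothesis $u(x,t) = \exp\big(o(\rho(x) + \sqrt{|t|})\big)$ enters. Evaluating \eqref{eqGE-I} not at the fixed point $(x,t)$ but along a sequence, I would instead argue as follows: fix $(x,t)$ with $t < 0$; for $R$ large the point $(x,t)$ lies in $Q_{R/2, R}$, so
\[
\frac{|\nabla u|(x,t)}{u(x,t)} \leqslant c(n) \Big( \frac{1 + \sqrt{|\alpha| R}}{R} + \frac{1}{\sqrt{t + R}} \Big)\big(1 - \log u(x,t)\big).
\]
Since $(x,t)$ is fixed, $1 - \log u(x,t)$ is a fixed finite number, and letting $R \to \infty$ forces $|\nabla u|(x,t)/u(x,t) = 0$. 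As $(x,t)$ was an arbitrary point with $t < 0$, we conclude $\nabla u \equiv 0$ on $M \times (-\infty, 0)$, so $u$ is spatially constant at each negative time; the equation \eqref{eqMainPDE-1} then reduces to an ODE in $t$ for this spatial constant, but actually the asymptotic growth condition $u(x,t) = \exp(o(\rho(x) + \sqrt{|t|}))$ combined with spatial constancy gives $u(t) = \exp(o(\sqrt{|t|}))$ as $t \to -\infty$, and one checks this is compatible with $u$ being genuinely constant — indeed once $u = u(t)$ depends on $t$ alone, $\nabla u = 0$ is automatic and the argument above no longer constrains $u$, so a further step is needed.

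The remaining step, and the part I expect to require the most care, is to upgrade "$u$ is spatially constant at each time" to "$u$ is globally constant." Here I would revisit the proof of Theorem \ref{thmGE-I} itself: inspecting the quantity $w = |\nabla \log(1 - \log u)|^2$, spatial constancy gives $w \equiv 0$ identically, and the differential inequality in Lemma \ref{lemmamain} is satisfied trivially, so no contradiction arises there. Instead, once $u = u(t)$, equation \eqref{eqMainPDE-1} becomes $u'(t) = a u \log u + b u + A u^p + B u^{-q}$ with $0 < u \leqslant 1$; under $a + b \leqslant 0$, $A \leqslant 0$, $B \geqslant 0$ one analyzes this scalar ODE on $(-\infty, 0)$ and shows the only solution with the sublinear-exponential growth $u(t) = \exp(o(\sqrt{|t|}))$ as $t \to -\infty$ is a constant (an equilibrium of the ODE): any non-equilibrium trajectory either blows up, hits the boundary $u = 0$ or $u = 1$ in finite backward time, or grows/decays too fast, contradicting the hypothesis. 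This ODE dichotomy is elementary but must be carried out, and it is the genuine content hiding behind the "We omit the detail of proof" remark; everything preceding it is a direct, almost mechanical, consequence of \eqref{eqGE-I} with $K = H_0 = 0$ and $R, T \to \infty$.
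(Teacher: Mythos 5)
There is a genuine gap in the first step: Theorem~\ref{thmGE-I} is stated only for solutions with $u \leqslant 1$ on the whole cylinder $Q_{R,T}$ (and its proof uses $1-h \geqslant 1$, i.e.\ $\log u \leqslant 0$, crucially). An ancient solution obeying only $u = \exp(o(\rho+\sqrt{|t|}))$ need not satisfy $u \leqslant 1$, so you cannot invoke \eqref{eqGE-I} directly, and your conclusion that ``$1-\log u(x,t)$ is a fixed finite number, so let $R\to\infty$'' silently bypasses the real issue. The correct fix, as in Souplet--Zhang, is to rescale: set $M_R=\max\bigl\{\sup_{Q_{R,R^2}}u,\,1\bigr\}$ and apply the theorem to $v=u/M_R$ on $Q_{R,R^2}$. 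Since $|\nabla v|/v=|\nabla u|/u$ and $1-\log v = 1-\log u + \log M_R$, the estimate for $u$ at a fixed $(x_1,t_1)$ picks up the additional, $R$-dependent factor $\log M_R$. The growth hypothesis is then used exactly here: for $(x,t)\in Q_{R,R^2}$ one has $\rho(x)+\sqrt{|t|}\leqslant 2R$, hence $\log M_R = o(R)$, which cancels against the prefactor $\sim 1/R + 1/\sqrt{R^2+t_1}$. You sensed that the asymptotic hypothesis should enter in controlling $1-\log u$, but then dismissed it; in fact it is indispensable at this stage, not merely in the final ODE step.

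A second, related issue that the proposal misses: rescaling replaces $b$ by $b+a\log M_R$, hence $H_0$ by $\max\{a+b+a\log M_R,0\}$. For the argument to close you need this to stay zero as $R\to\infty$, which (since $M_R\geqslant 1$) requires $a\leqslant 0$, not merely $a+b\leqslant 0$. This is not a pedantic point: with $a=1$, $b=-1$, $A=B=0$ one checks directly that $u(x,t)=\exp\bigl(1+Ce^{t}\bigr)$ is a strictly positive, spatially constant, bounded (hence growth-admissible) ancient solution of $u_t=\Delta_f u + u\log u - u$ on any $M$ with $\Ric_f\geqslant 0$, yet $u$ is not constant. So the corollary is false as literally stated unless one also assumes $a\leqslant 0$ (which the paper omits). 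Your proposal should flag this.

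Finally, you are right that once $\nabla u\equiv 0$ one is left with a genuine scalar ODE $\phi'=a\phi\log\phi+b\phi+A\phi^p+B\phi^{-q}$ and must rule out nonconstant ancient solutions with $\log\phi(t)=o(\sqrt{|t|})$; for the heat equation this step is trivial ($\Delta u=0\Rightarrow u_t=0$), but here it is not. For $a\leqslant 0$, writing $\psi=\log\phi$ and $g(\psi)=a\psi+b+Ae^{(p-1)\psi}+Be^{-(q+1)\psi}$, one has $g'\leqslant a\leqslant 0$, so every rest point is nonincreasingly stable and any nonconstant trajectory, run backward in time, escapes to $\pm\infty$ either in finite backward time or with $|\psi(t)|$ growing at least linearly in $|t|$, contradicting $o(\sqrt{|t|})$. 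So the ODE step is indeed where some work lives, but it is straightforward once $a\leqslant 0$; the genuinely nontrivial content you should have highlighted is the rescaling and the $\log M_R = o(R)$ cancellation in the first step, which the paper (which gives no proof) leaves entirely implicit.
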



\section{Gradient estimates for (\ref{eqMainPDE-1}) under the Ricci flow: Proof of Theorem \ref{thmGEUnderRicci-I}}
\label{sec-GE-UnderRicciFlow}

In this section, we derive gradient estimates for solutions of the heat equation \eqref{eqMainPDE-1} under the backward Ricci flow. Recall the system that $u$ and $g$ solve
\begin{equation} \label{ricflow1}
\left\{
\begin{split}
u_t&=\Delta u+au\log u+bu+Au^p+Bu^{-q}\\
\frac{\partial}{\partial t}g(x,t)&=-2\Ric (x, t)
\end{split}
\right.
\end{equation} 
with $x\in M$ and $t\in[0, T]$. To prove Theorem \ref{thmGEUnderRicci-I}, we follow the procedure used in the proof of Theorem \ref{thmGE-I}. First, we start with a basic lemma in the same fashion of Lemma \ref{lemmamain}.

\begin{lemma}\label{lemmamain1}
Let $(M, g(x, t))_{t\in[0, T]}$ be a complete solution to the Ricci flow 
$$ \frac{\partial}{\partial t}g(x, t)=-2\Ric (x, t) $$
 and $u$ be smooth positive solution to the heat equation 
$$ u_t=\Delta u+au\log u+bu+Au^p+Bu^{-q}. $$
Suppose that $u\leqslant 1$ for all $(x, t)\in Q_{R, T}:=B(x_0, R)\times[0, T]$. Denote $h=\log u$ and $w=|\nabla\log(1-h)|^2$. Then on the cylinder $Q_{R, T}$, we have
\begin{equation} \label{qa1}
\Delta w-w_t\geqslant -2H_1w+2(1-h)w^2+\frac{2h}{1-h}\left\langle \nabla w, \nabla h\right\rangle,
\end{equation}
where
\[
H_1:= \left\{
\begin{split}
&\max\{a+b,0\}+\sup\{Ap, A(p-1)u^{-1}, Apu^{-1},0\}\\
&+\sup\{(-q-1)Bu^{-q-1},0\}
\end{split}
\right\}.
\]
\end{lemma}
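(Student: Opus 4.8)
The plan is to mimic closely the proof of Lemma \ref{lemmamain}, the only new feature being that the metric now evolves by $\partial_t g = -2\Ric$, so that time differentiation no longer commutes with the gradient in the naive way. First I would set $h = \log u$; a direct computation shows that the first equation of \eqref{ricflow1} is equivalent to
\[
h_t = \Delta h + |\nabla h|^2 + ah + b + Au^{p-1} + Bu^{-q-1},
\]
exactly as in the static case, since the extra Ricci terms from the flow do not enter the scalar heat equation for $u$. Next I would apply the evolving Bochner formula: if $\phi$ is a function then under $\partial_t g = -2\Ric$ one has
\[
\partial_t |\nabla \phi|^2 = 2\langle \nabla \phi, \nabla \phi_t\rangle + 2\Ric(\nabla\phi,\nabla\phi),
\]
and the static Bochner identity $\Delta|\nabla\phi|^2 = 2|\nabla^2\phi|^2 + 2\Ric(\nabla\phi,\nabla\phi) + 2\langle\nabla\phi,\nabla\Delta\phi\rangle$ still holds pointwise. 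Subtracting, the two $\Ric(\nabla\phi,\nabla\phi)$ contributions cancel, giving
\[
(\Delta - \partial_t)|\nabla\phi|^2 = 2|\nabla^2\phi|^2 + 2\langle \nabla\phi, \nabla(\Delta\phi - \phi_t)\rangle \geqslant 2\langle\nabla\phi,\nabla(\Delta\phi - \phi_t)\rangle.
\]
This is precisely why the curvature lower bound $\Ric_f \geqslant -(n-1)K$ that appeared in Lemma \ref{lemmamain} is replaced here by $H_1$ alone, with no $K$-term: the Ricci contribution is absorbed by the flow.

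Then I would choose $\phi = \log(1-h)$, so $w = |\nabla\phi|^2$, and compute $\Delta\phi - \phi_t$. Using $\nabla\phi = -(\nabla h)/(1-h)$ and the rule $\Delta\phi = -\Delta h/(1-h) - w$ (no drift term now, since we are under the pure Ricci flow, not the weighted setting), together with the heat equation for $h$, one gets
\[
\Delta\phi - \phi_t = \frac{ah + b + Au^{p-1} + Bu^{-q-1}}{1-h} - hw,
\]
where I have used $h_t/(1-h) = -(\log(1-h))_t = -\phi_t$ and combined terms; this is the analogue of the identity for $\Delta_f\log(1-h)$ in the proof of Lemma \ref{lemmamain} but with the $(\log(1-h))_t$ moved to the left side. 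Differentiating this expression and pairing with $\nabla\phi = -\nabla\log(1-h)$ produces, term by term exactly as in \eqref{d23}, the identity
\[
2\langle\nabla\phi, \nabla(\Delta\phi - \phi_t)\rangle = 2\Big\{-\tfrac{a+b}{1-h} + \big(-p + \tfrac{-h}{1-h}\big)Au^{p-1} + \big(q + \tfrac{-h}{1-h}\big)Bu^{-q-1}\Big\}w + 2(1-h)w^2 + \tfrac{2h}{1-h}\langle\nabla w,\nabla h\rangle.
\]
Finally, bounding the three bracketed coefficients from below by $-\max\{a+b,0\}$, $-\sup\{Ap, A(p-1)u^{-1}, Apu^{-1},0\}$, and $-\sup\{(-q-1)Bu^{-q-1},0\}$ respectively — the same elementary case analysis (using $u \leqslant 1$, hence $u^{p-1}$ versus $u^{-1}$ comparisons, and $0 \le -h/(1-h) \le 1$) carried out at the end of Lemma \ref{lemmamain} — yields \eqref{qa1} with the stated $H_1$.

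The main obstacle, and the only place one must be careful, is the bookkeeping around the evolving metric: verifying the evolving Bochner formula and making sure the $\Ric$ terms cancel exactly between $\Delta|\nabla\phi|^2$ and $\partial_t|\nabla\phi|^2$, and tracking that $\partial_t$ of a gradient norm picks up the Ricci term rather than vanishing. Once that cancellation is correctly installed, everything downstream — the identity for $\Delta\phi - \phi_t$, the term-by-term differentiation, and the coefficient estimates — is formally identical to the static argument of Lemma \ref{lemmamain}, so no genuinely new inequality is needed.
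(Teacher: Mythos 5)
Your proof is correct and follows essentially the same route as the paper: use the Bochner formula on $\phi=\log(1-h)$, observe that the Ricci contribution from $\Delta|\nabla\phi|^2$ cancels exactly the Ricci contribution to $\partial_t|\nabla\phi|^2$ under $\partial_t g=-2\Ric$, and then carry out the same term-by-term differentiation and case analysis as in Lemma~\ref{lemmamain}. The only (minor, clarifying) organizational difference is that you package the cancellation up front into the clean identity $(\Delta-\partial_t)|\nabla\phi|^2=2|\nabla^2\phi|^2+2\langle\nabla\phi,\nabla(\Delta\phi-\phi_t)\rangle$ and then compute $\Delta\phi-\phi_t$ directly, whereas the paper substitutes the evolution of $w_t$ into $\langle\nabla\Delta\log(1-h),\nabla\log(1-h)\rangle$ after the fact; these are the same computation.
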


\begin{proof}
Since $u$ is a solution to the heat equation 
$$ u_t=\Delta u+au\log u+bu+Au^p+Bu^{-q}, $$
the function $h=\log u$ satisfies 
\begin{equation} 
h_t=\Delta h+|\nabla h|^2+ah+b+Au^{p-1}+Bu^{-q-1}.
\end{equation}
The Bochner--Weitzenb\"ock formula applied to $\log(1-h)$ gives
\[
\begin{split}
\Delta |\nabla(\log(1-h))|^2
&=2\Hess^2(\log(1-h))+2\left\langle\nabla\Delta \log(1-h),\nabla\log(1-h)\right\rangle \\
&\quad+2\Ric (\nabla\log(1-h),\nabla\log(1-h)) .
\end{split}
\]
Hence,
\begin{equation}\label{k22}
\Delta w\geqslant 2\Ric (\nabla\log(1-h),\nabla\log(1-h))+2\left\langle\nabla\Delta \log(1-h),\nabla\log(1-h)\right\rangle.
\end{equation}
As routine, to estimate $\Delta w$, we first estimate $\left\langle\nabla\Delta \log(1-h),\nabla\log(1-h)\right\rangle$. Clearly,
\[
\begin{split}
\Delta\log(1-h) 
&=(1-h)w+\frac{ah+b+Au^{p-1}+Bu^{-q-1}}{1-h}+(\log(1-h))_t-w \\
&=\frac{ah+b+Au^{p-1}+Bu^{-q-1}}{1-h}+(\log(1-h))_t-hw. 
\end{split}
\]
On the other hand, by the equation $\partial_t g(x, t)=-2\Ric (x, t) $, we have
$$w_t=2\langle \nabla \log(1-h),\nabla(\log(1-h)_t)\rangle+2\Ric (\nabla\log(1-h),\nabla\log(1-h)). $$
Therefore, 
\[
\begin{split}
\big\langle\nabla  \Delta & \log(1-h),\nabla\log(1-h)\big\rangle \\
=& \Big\langle\nabla \Big(\frac{ah+b+Au^{p-1}+Bu^{-q-1}}{1-h}+(\log(1-h))_t-hw\Big),\nabla\log(1-h)\Big\rangle \\
= &\left(a+A(p-1)u^{p-1}+B(-q-1)u^{-q-1}\right)w- \frac{ah+b+Au^{p-1}+Bu^{-q-1}}{1-h}w \\
& + \frac{w_t}2 -\Ric (\nabla\log(1-h),\nabla\log(1-h))+ (1-h)w^2+\frac{ h}{1-h}\left\langle \nabla w, \nabla h\right\rangle. 
\end{split}
\]
Thus, we can further estimate \eqref{k22} as follows
\begin{equation}\label{23}
\begin{split}
\Delta w-w_t \geqslant &-2\left\{
\begin{split}
&a+A(p-1)u^{p-1}+B(-q-1)u^{-q-1}\\
&+\frac{ah+b+Au^{p-1}+Bu^{-q-1}}{1-h}
\end{split}
\right\}w \\
&+2(1-h)w^2+\frac{2h}{1-h}\langle\nabla w,\nabla h\rangle \\
=&2\left\{
\begin{split}
&-\frac{a+b}{1-h}w+ \Big(-p+\frac{-h}{1-h}\Big)Au^{p-1}\\
&+\Big(q+\frac{-h}{1-h}\Big)Bu^{-q-1}
\end{split}
\right\}w  \\
&+2(1-h)w^2 +\frac{2h}{1-h}\left\langle\nabla w, \nabla h\right\rangle . 
\end{split}
\end{equation}
As in the proof of Lemma \ref{thmGEUnderRicci-I}, we have that
\[
-\frac{a+b}{1-h}\geqslant -\max\{a+b,0\},
\]
that
\[\begin{split}
\Big(-p+\frac{-h}{1-h}\Big)Au^{p-1}\geqslant &
\begin{cases}
-Ap, &\text{ if } A\geqslant 0 \text{ and } p-1\geqslant 0,\\
0, &\text{ if }A<0 \text{ and }p-1\geqslant 0,\\
-Apu^{-1}, &\text{ if }A\geqslant 0 \text{ and } p-1<0,\\
-A(p-1)u^{-1}, &\text{ if }A<0 \text{ and }p-1<0,
\end{cases} \\
=&-\sup\{Ap,A(p-1)u^{-1},Apu^{-1},0\},
\end{split}
\]
and that
\[
\begin{split}
\Big(q+\frac{-h}{1-h}\Big)Bu^{-q-1}\geqslant -\sup\{B(-q-1)u^{-q-1},0\}. 
\end{split}
\]
Combining \eqref{23} and above three estimates, we obtain 
$$
\Delta w-w_t\geqslant -2H_1w+2(1-h)w^2+\frac{2h}{1-h}\left\langle\nabla w\nabla h\right\rangle.
$$
The proof is complete.
\end{proof}

Let us recall the following cut-off function in \cite{Bai, SZ, Zhang}.

\begin{lemma}[see \cite{Bai, SZ, Zhang}]\label{cutoff1}
Given $\tau\in[0, T]$, there exists a smooth cut-off function $\overline{\psi}(r,t)$ supported in $[0, R]\times[0, T]$ satisfying
\begin{enumerate}
\item $0\leqslant \overline{\psi}(r, t)\leqslant 1$ in $[0, R]\times[0, T]$.
\item $\overline{\psi}(r, t)=1$ in $[0, R/2]\times[\tau, T]$ and $\partial_r\overline{\psi}(r, t)=0$ in $[0, R/2]\times[0, T]$.
\item When $0<\alpha\leqslant 1$, there is a constant $C_\alpha$ such that 
$$-\frac{C_\alpha\overline{\psi}^\alpha}{R}\leqslant \frac{\partial\overline{\psi}}{\partial r}\leqslant 0;\text{ and }\left|\frac{\partial^2\overline{\psi}}{\partial r^2}\right|\leqslant \frac{C_\alpha\overline{\psi}^\alpha}{R^2}.$$
\item $\overline{\psi}(r, 0)=0$ for all $r\in[0, \infty)$ and $|\partial_t\overline{\psi}|\leqslant \bar{C} \tau^{-1} \overline{\psi}^{1/2}$ on $[0, \infty)\times[0, T]$.
\end{enumerate} 
\end{lemma}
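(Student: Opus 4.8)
The plan is to produce $\overline\psi$ in the separated form $\overline\psi(r,t)=\phi(r)\,\eta(t)$, where $\phi$ is a spatial cut-off and $\eta$ a temporal cut-off, each obtained by rescaling a fixed one-variable model. For the spatial factor I would fix once and for all a smooth non-increasing function $\overline\phi\colon[0,\infty)\to[0,1]$ with $\overline\phi\equiv1$ on $[0,1/2]$ and $\overline\phi\equiv0$ on $[1,\infty)$, chosen so that $\overline\phi$ vanishes to infinite order at $r=1$ --- for instance so that near $r=1$ it coincides with a positive constant multiple of $e^{-1/(1-r)}$, smoothly patched to the constant $1$ near $r=1/2$ in the standard way (e.g.\ via a quotient of shifted copies of $s\mapsto e^{-1/s}$) --- and then set $\phi(r)=\overline\phi(r/R)$. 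For the temporal factor I would fix a smooth non-decreasing $\overline\eta\colon\mathbb R\to[0,1]$ with $\overline\eta\equiv0$ on $(-\infty,0]$, $\overline\eta\equiv1$ on $[1,\infty)$, again vanishing to infinite order at $s=0$ (of $e^{-1/s}$ type near $0$), and set $\eta(t)=\overline\eta(t/\tau)$. Then $\overline\psi=\phi\,\eta$ is smooth, vanishes identically for $r\geqslant R$, and it remains only to check properties (1)--(4).

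The analytic heart of the matter is the following elementary fact about the models: since $\overline\phi$ vanishes to infinite order at $r=1$, for every $\alpha\in(0,1)$ there is a constant $C_\alpha$ with $|\overline\phi'|\leqslant C_\alpha\,\overline\phi^{\,\alpha}$ and $|\overline\phi''|\leqslant C_\alpha\,\overline\phi^{\,\alpha}$ on $[0,\infty)$. Away from $r=1$ this is immediate because $\overline\phi$ is bounded away from $0$ on any compact subinterval of $[0,1)$ on which it is not identically $1$; near $r=1$ it reduces to the elementary bound $(1-r)^{-k}\leqslant C_\alpha\,e^{(1-\alpha)/(1-r)}$, valid since $1-\alpha>0$ (take $C_\alpha=\sup_{x\geqslant1}x^{k}e^{-(1-\alpha)x}<\infty$). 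The range $\alpha\in(0,1)$ is in fact all that is ever invoked in the gradient-estimate proofs of this paper --- only the exponents $\tfrac12$ and $\tfrac34$ actually occur --- and for those finitely many exponents one may equally well use an $\overline\phi$ vanishing to a suitable finite order. Likewise $|\overline\eta'|\leqslant \bar C\,\overline\eta^{\,1/2}$ on $\mathbb R$. Rescaling $r\mapsto r/R$ and $t\mapsto t/\tau$ then yields $-C_\alpha R^{-1}\phi^{\alpha}\leqslant\phi'\leqslant0$, $|\phi''|\leqslant C_\alpha R^{-2}\phi^{\alpha}$, $\phi'\equiv0$ on $[0,R/2]$, together with $\overline\eta(0)=0$ and $|\eta'|\leqslant \bar C\,\tau^{-1}\eta^{1/2}$.

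Finally I would read off (1)--(4) for $\overline\psi=\phi\,\eta$ directly. Property (1) is clear since $0\leqslant\phi,\eta\leqslant1$. Property (2) holds because $\phi\equiv1$ on $[0,R/2]$, $\eta\equiv1$ on $[\tau,T]$, and $\phi'\equiv0$ on $[0,R/2]$ makes $\partial_r\overline\psi$ vanish on $[0,R/2]\times[0,T]$. The first assertion of (4) is just $\overline\psi(r,0)=\phi(r)\eta(0)=0$. For the derivative bounds I would compute $\partial_r\overline\psi=\phi'\eta$, $\partial_r^2\overline\psi=\phi''\eta$, $\partial_t\overline\psi=\phi\,\eta'$; the sign in (3) is correct since $\phi'\leqslant0\leqslant\eta$, and the sizes follow from the rescaled model bounds once one uses the two trivial inequalities $\eta\leqslant\eta^{\alpha}$ (valid for $0<\alpha\leqslant1$ and $0\leqslant\eta\leqslant1$), whence $\phi^{\alpha}\eta\leqslant(\phi\eta)^{\alpha}=\overline\psi^{\alpha}$, and $\phi\leqslant\phi^{1/2}$, whence $\phi\,\eta^{1/2}\leqslant(\phi\eta)^{1/2}=\overline\psi^{1/2}$. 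The one genuinely delicate point in the whole argument is the construction of the one-variable models whose first two derivatives are dominated by an arbitrary sub-one power of the function itself: this is precisely what forces the infinite-order (or at least sufficiently high-order) vanishing at the endpoint of the support, and one has to confirm that the usual gluing still produces a $C^\infty$ function. Everything past that is bookkeeping with the scale factors $1/R$, $1/R^2$, $1/\tau$ and the two elementary inequalities just noted.
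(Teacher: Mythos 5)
Your construction is correct and is exactly the standard one behind the cited sources [Bai, SZ, Zhang]: the paper itself offers no proof of this lemma, and the separated form $\overline\psi=\phi(r/R)\,\eta(t/\tau)$ with one-variable models whose derivatives are controlled by sub-one powers of the function, combined with $\eta\leqslant\eta^{\alpha}$ and $\phi\leqslant\phi^{1/2}$, is precisely how it is done there. One remark: your restriction to $\alpha\in(0,1)$ is not merely a convenience but a necessity --- for $\alpha=1$ the inequality $\partial_r\overline\psi\geqslant -C_1\overline\psi/R$ together with $\overline\psi(R,\cdot)=0$ forces $\overline\psi\equiv0$ by a Gronwall argument, so the endpoint case as literally stated in the lemma cannot hold for any compactly supported cut-off; since only $\alpha=\tfrac12$ and $\alpha=\tfrac34$ are used in the paper, as you note, this costs nothing.
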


Now we are ready to prove Theorem \ref{thmGEUnderRicci-I}. 

\begin{proof}[Proof of Theorem \ref{thmGEUnderRicci-I}]
To prove Theorem \ref{thmGEUnderRicci-I}, we follows the same procedure used previously to prove Theorem \ref{thmGE-I}; hence in view of Lemma \ref{lemmamain1}, it suffices to bound $w^2$ appropriately from above. Define the cut-off function $\psi:M\times[t_0-T,t_0]\to \mathbb{R}$ such that 
$$
\psi (x,t) =\bar\psi(d(x,x_0),t)
$$
where $x_0\in M$ is the fixed point mentioned in the statement of the theorem. Let $(x_1,t_1)$ be a maximum point for the function $\psi w$ in the close set 
$$(x, t)\in \widetilde{Q}_{R, T}:=B(x_0, R)\times[0, T].$$
We may assume that $(\psi w)(x_1, t_1)>0$; otherwise, the condition $(\psi w)(x_1, t_1)\leqslant 0$ implies that $(\psi w)(x, \tau)\leqslant 0$ for all $x\in M$ such that $d(x, x_0)\leqslant R$. However, by the definition of $\psi$, we have $\psi(x, \tau)\equiv 1$ for all $x\in M$ satisfying $d(x, x_0)\leqslant R/2$. This implies that $w(x, \tau)\leqslant 0$ whenever $d(x, x_0)\leqslant R/2$. Since $\tau$ is arbitrary, we conclude that the inequality \eqref{eqGE-UnderRicciFlow} holds on $\widetilde{Q}_{R/2, T}$ as claimed. In addition to the sign convention, by a standard argument of Calabi \cite{Calabi}, we may assume that $\psi w$ is smooth at $(x_1, t_1)$. 

As always, at $(x_1,t_1)$, we have the following facts: $\nabla(\psi w)=0$, $\Delta_f(\psi w)\leqslant 0$, and $(\psi w)_t\geqslant 0$. Performing a similar argument as in the proof of Theorem \ref{thmGE-I}, we easily obtain
\[\begin{split}
0\geqslant &\Big(\Delta_f-\frac{\partial}{\partial t}\Big)(\psi w)-\Big\langle\frac{2h}{1-h}\nabla h+2\frac{\nabla\psi w}{\psi},\nabla(\psi w)\Big\rangle \\
=&\psi(\Delta_fw-w_t)+w(\Delta_f\psi-\psi_t)+2\langle\nabla\psi,\nabla w\rangle \\
&-\frac{2h}{1-h}\left\langle\nabla h, \nabla w\right\rangle\psi-\frac{2h}{1-h}\left\langle\nabla h, \nabla\psi\right\rangle w-2\frac{|\nabla\psi|^2}{\psi}w-2\left\langle\nabla\psi, \nabla w\right\rangle
\end{split}\]
at $(x_1,t_1)$. Making use of \eqref{qa1}, we further obtain
\[\begin{split}
0 \geqslant &-2H_0(\psi w)+2\psi(1-h)w^2-\frac{2h}{1-h}\left\langle \nabla\psi,\nabla h\right\rangle w +w\Delta_f\psi-w\psi_t-2\frac{|\nabla \psi|^2}{\psi}w 
\end{split}\]
at $(x_1,t_1)$. Equivalently, the preceding inequality can be rewritten as follows
\begin{equation}\label{d24}
2\psi(1-h)w^2\leqslant  2H_0\psi w+\frac{2h}{1-h}\left\langle\nabla \psi,\nabla h\right\rangle w -w\Delta_f\psi+w\psi_t+2\frac{|\nabla\psi|^2}{\psi}w.
\end{equation}
We now consider two possible cases.

\medskip\noindent\textbf{Case 1}. If $x_1\in B(x_0,R/2)$, then by the definition of $\psi$, we know that $\psi (\cdot, \tau)$ is constant on any spacelike in $B(x_0,R/2)$. Hence, it follows from \eqref{d24} that
$$
2\psi w^2\leqslant 2\psi(1-h)w^2\leqslant 2H_0\psi w+w\psi_t
$$ 
at $(x_1, t_1)$. For arbitrary $x \in B(x_0, R/2)$, we observe that
\begin{align*} 
w(x, \tau) &= \psi w(x, \tau) \leqslant (\psi^{1/2}w)(x_1,t_1) \\
&\leqslant H_0\psi^{1/2}(x_1, t_1)+\frac{\psi_t}{2\psi^{1/2}}(x_1, t_1)\\ 
&\leqslant H_0+ C(\tau),
\end{align*}
thanks to Lemma \ref{cutoff}. From this we obtain the desired estimate.

\medskip\noindent\textbf{Case 2}. Suppose that $x_1\notin B(x_0,R/2)$ with $R \geqslant 2$. For simplicity, we denote by $c$ a generic constant whose value may change from line to line. Since $\Ric_f\geqslant -(n-1)\kappa$ and $r(x_1,x_0)\geqslant 1$ in $B(x_0,R)$, we can apply the $f$-Laplacian comparison theorem \cite{Bri} to obtain
\begin{equation}
\Delta_fr(x_1)\leqslant \alpha +(n-1)\kappa(R-1), 
\end{equation}
where $\alpha:=\max_{x\in B(x_0,1)}\Delta_fr(x)$. By similar computations as in the proof of Theorem \ref{thmGE-I}, we arrive at
\begin{equation}\label{k1}
\left\{
\begin{split}
\frac{2h}{1-h}\left\langle\nabla \psi,\nabla h\right\rangle w&\leqslant \psi(1-h)w^2+c\frac{h^4}{R^4(1-h)^3},\\
-w\Delta_f\psi&\leqslant \frac{1}{8}\psi w^2+c\Big(\frac{1}{R^4}+\frac{|\alpha|^2}{R^2}+\kappa^2\Big) ,\\
2\frac{|\nabla\psi|^2}{\psi}w&\leqslant \frac{1}{8}\psi w^2+\frac{c}{R^4}.
\end{split}
\right.
\end{equation}
A direct calculation implies
\[\begin{split}
(w\psi_t)(x_1, t_1)=&w(x_1, t_1)\frac{\partial\overline{\psi}}{\partial t}(\dist (x_1, x_0, t_1), t_1) \\
&+w(x_1, t_1)\frac{\partial\overline{\psi}}{\partial r}(\dist (x_1, x_0, t_1), t_1)\Big(\frac{\partial}{\partial t}\dist (x_1, x_0, t_1)\Big). 
\end{split}\]
Note that under the assumption $|\Ric (x, t)|\leqslant \kappa$, it was proved in \cite[Eq. (2.8)]{Bai} that 
\[
\Big|\frac{\partial}{\partial t}\dist (x_1, x_0, t)\Big|\leqslant \kappa R.
\]
Therefore,
\begin{equation}\label{k2}
\begin{split}
(w\psi_t)(x_1, t_1)&\leqslant w(x_1, t_1)\Big|\frac{\partial\overline{\psi}}{\partial t}(x_1, t_1)\Big|+\kappa Rw(x_1, t_1)\Big|\frac{\partial\overline{\psi}}{\partial r}(x_1, t_1)\Big| \\
&\leqslant \frac{1}{16}(\psi w^2)(x_1, t_1)+c\Big(\frac{1}{\tau^2}+\kappa w\psi^{1/2}\Big)(x_1, t_1) \\
&\leqslant \frac{1}{8}(\psi w^2)(x_1, t_1)+c\Big(\frac{1}{\tau^2}+\kappa^2\Big)(x_1, t_1).
\end{split}
\end{equation}
On the other hand, by the Cauchy--Schwarz inequality, we have 
\begin{equation}\label{k3}
2H_0\psi w\leqslant \frac{1}{8}\psi w^2+ cH_0^2.
\end{equation}
Combining \eqref{d24}-\eqref{k3}, we conclude that
\[
2 \psi(1-h)w^2\leqslant   \psi(1-h)w^2+c\left\{
\begin{split}
&\frac{h^4}{R^4(1-h)^3}+\kappa^2+H_0^2\\&+\frac{1}{R^4}  +\frac{\alpha^2}{R^2}+\frac{1}{\tau^2}
\end{split}
\right\}+\frac{1}{2}\psi w^2.
\]
Since $1-h\geqslant 1$, we further obtain
\begin{align*}
\psi w^2\leqslant c\Big(\frac{h^4}{R^4(1-h)^4}+\kappa^2+H_0^2+\frac{1}{R^4}+\frac{\alpha^2}{R^2}+\frac{1}{\tau^2}\Big).
\end{align*} 
Finally, since $\psi(x,\tau)=1$ when $x\in B(x_0,R/2)$ and $h^4/(1-h)^4\leqslant 1$, we obtain
$$
w^2(x,\tau)\leqslant \psi w^2(x_1,t_1)\leqslant c\Big(\frac{1}{R^4}+\frac{\alpha^2}{R^2}+\frac{1}{\tau^2}+\kappa^2+H_0^2\Big).
$$
for all $x\in B(x_0, R/2)$. From this we obtain the desired result.
\end{proof}


\section{Gradient estimates for (\ref{eqMainPDE-3}): Proof of Theorem \ref{thmGE-II}}
\label{sec-GE-II}

\subsection{A basic lemma}

Now as a routine, to prove Theorem $\ref{thmGE-II}$ we need the following basic lemma. 

\begin{lemma}\label{lemmaeqMainPDE-3}
Let $ A, B$ be real numbers. Suppose that $u$ is a  solution to the equation 
$$u_t=\Delta_f u+Ae^u+Be^{-u}+D$$
on $Q_{R, T}:=B(x_0, R)\times[t_0-T, t_0]$, where $x_0\in M$ is a fixed point, $R>0$, and $t_0\in\mathbb{R}$. Moreover, assume that $1 < u < C$. Let $w=|\nabla u^{1/2}|^2$, then the following estimate
\begin{equation}\label{Eqmain4}
\Delta_fw-w_t\geqslant -2[K(n-1)+H]w-\dfrac{2}{u^{1/2}}\langle\nabla w,\nabla u^{1/2}\rangle+2\dfrac{w^2}{u}
\end{equation}
holds on $Q_{R, T}$ where 
\[
H=\max \left\{Ae^u\dfrac{2u-1}{2u}-Be^{-u}\dfrac{2u+1}{2u}-\dfrac{D}{2u},0\right\}.
\]
\end{lemma}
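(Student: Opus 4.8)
The plan is to mimic the proof of Lemma \ref{lemmamain}, but working directly with $v = u^{1/2}$ rather than with $\log u$ or with $u$ itself. First I would record the equation satisfied by $v$. Since $u = v^2$, differentiating gives $u_t = 2 v v_t$, $\nabla u = 2 v \nabla v$, and $\Delta_f u = 2 v \Delta_f v + 2 |\nabla v|^2$; substituting into the PDE $u_t = \Delta_f u + A e^u + B e^{-u} + D$ and dividing by $2v$ yields
\[
v_t = \Delta_f v + \frac{|\nabla v|^2}{v} + \frac{A e^{v^2} + B e^{-v^2} + D}{2v}.
\]
Note $w = |\nabla v|^2$, so the awkward first-order term is $w/v$. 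This is the analogue of the $h$-equation in Lemma \ref{lemmamain}.

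Next I would apply the Bochner--Weitzenböck formula to $v$: using $\Ric_f \geqslant -(n-1)K$ (here I would be careful about the factor of $2$ — the earlier lemma's hypothesis reads $\Ric_f \geqslant -2(n-1)K$, while Theorem \ref{thmGE-II} and the present lemma state $-(n-1)K$, so I would track the constant consistently),
\[
\Delta_f w \geqslant 2 |\Hess v|^2 - 2(n-1)K w + 2 \langle \nabla \Delta_f v, \nabla v \rangle.
\]
I would discard $|\Hess v|^2 \geqslant 0$ (unlike some treatments that keep it; here it is not needed). The key computation is then to expand $\langle \nabla \Delta_f v, \nabla v\rangle$ using the $v$-equation: $\Delta_f v = v_t - w/v - (A e^{v^2} + B e^{-v^2} + D)/(2v)$, so
\[
\langle \nabla \Delta_f v, \nabla v\rangle = \langle \nabla v_t, \nabla v\rangle - \Big\langle \nabla\frac{w}{v},\nabla v\Big\rangle - \Big\langle \nabla \frac{A e^{v^2} + B e^{-v^2} + D}{2v},\nabla v\Big\rangle.
\]
For the first term, $\langle \nabla v_t,\nabla v\rangle = \tfrac12 w_t$ (no Ricci-flow correction this time since $g$ is fixed). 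For the second, $\nabla(w/v) = \nabla w / v - w \nabla v / v^2$, so $-\langle \nabla(w/v),\nabla v\rangle = -\langle \nabla w,\nabla v\rangle/v + w^2/v^2$. The third term is the one demanding care: differentiating $(A e^{v^2} + B e^{-v^2} + D)/(2v)$ in the direction $\nabla v$ produces, after the product and chain rules, a multiple of $w = |\nabla v|^2$ with coefficient
\[
A e^{v^2}\frac{2v^2-1}{2v^2} - B e^{-v^2}\frac{2v^2+1}{2v^2} - \frac{D}{2v^2},
\]
which, back in the variable $u = v^2$, is exactly the quantity inside the definition of $H$. I would then bound this coefficient below by $-H$.

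Assembling these pieces into $\Delta_f w - w_t = \Delta_f w - 2\langle \nabla v_t,\nabla v\rangle$ and multiplying through appropriately gives
\[
\Delta_f w - w_t \geqslant -2(n-1)K w + 2\frac{w^2}{v^2} - \frac{2}{v}\langle \nabla w,\nabla v\rangle - 2 H w,
\]
and since $v^2 = u$ and $v = u^{1/2}$ this is precisely \eqref{Eqmain4}. The main obstacle, as I see it, is purely a bookkeeping one: carrying out the differentiation of the fraction $(A e^{v^2}+B e^{-v^2}+D)/(2v)$ cleanly, isolating the coefficient of $w$ correctly (the chain-rule factor $\nabla(v^2) = 2v\nabla v$ interacts with the $1/(2v)$ prefactor and with the $1/v$ from differentiating $v^{-1}$, and the signs of the $B$ and $D$ terms must come out as in the statement), and keeping the normalization of $K$ consistent with the hypothesis actually used. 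Everything else — Bochner, the identity $\langle\nabla v_t,\nabla v\rangle = \tfrac12 w_t$, the expansion of $\nabla(w/v)$ — is routine. Once Lemma \ref{lemmaeqMainPDE-3} is in hand, the proof of Theorem \ref{thmGE-II} proceeds exactly as that of Theorem \ref{thmGE-I}: introduce the space-time cut-off $\psi$ of Lemma \ref{cutoff}, evaluate at a maximum point $(x_1,t_1)$ of $\psi w$, use $\nabla(\psi w) = 0$, $\Delta_f(\psi w)\leqslant 0$, $(\psi w)_t \geqslant 0$, apply the $f$-Laplacian comparison theorem of \cite{Bri}, and absorb the lower-order terms via Young's and Cauchy--Schwarz inequalities, with the extra factors of $C$ entering through the bound $1 < u < C$ on the terms $w^2/u$ and $\langle\nabla w,\nabla u^{1/2}\rangle/u^{1/2}$.
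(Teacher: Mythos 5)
Your proposal is correct and follows essentially the same route as the paper: you write $v=u^{1/2}$, derive its evolution equation, and plug into Bochner--Weitzenb\"ock, which is just a notational repackaging of the paper's direct computation of $\Delta_f u^{1/2}$; your identification of the coefficient $Ae^{u}\tfrac{2u-1}{2u}-Be^{-u}\tfrac{2u+1}{2u}-\tfrac{D}{2u}$ and the bound by $-H$ match the paper line for line. (Your remark about the ``$-2(n-1)K$'' in Lemma \ref{lemmamain}'s proof is right — that is a typo in the paper; the arithmetic there and here is for $\Ric_f\geqslant-(n-1)K$.)
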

\begin{proof}To prove Lemma \ref{lemmaeqMainPDE-3}, we introduce a new function 
\[
w=|\nabla u^{1/2}|^2.
\]
By direct computations, we obtain
\[\begin{aligned}
\Delta_fu^{1/2}
&=\dfrac{1}{2u^{1/2}}\Delta_fu-\dfrac{1}{4}u^{-3/2}|\nabla u|^2\\
&=\dfrac{1}{2u^{1/2}}(u_t-Ae^u-Be^{-u}-D)-\dfrac{w}{u^{1/2}}.
\end{aligned}\]
Therefore, the Bochner--Weitzenb\"ock formula implies
\begin{align*}
\Delta_f w\geq&-2K(n-1)w+2\langle\nabla\Delta_f u^{1/2},\nabla u^{1/2}\rangle\\
=&-2K(n-1)w+2\Big\langle \nabla\Big (-\dfrac{w}{u^{1/2}}+\dfrac{1}{2u^{1/2}}(u_t-Ae^u-Be^{-u}-D)\Big ),\nabla u^{1/2}\Big \rangle\\
=&-2K(n-1)w-2\dfrac{1}{u^{1/2}}\langle\nabla w,\nabla u^{1/2}\rangle+2\dfrac{w^2}{u}+w_t\\
&\quad\quad -\Big\langle \nabla\Big (\dfrac{1}{u^{1/2}}(Ae^u+Be^{-u}+D)\Big ),\nabla u^{1/2}\Big \rangle\\
=&-2K(n-1)w-2\dfrac{1}{u^{1/2}}\langle\nabla w,\nabla u^{1/2}\rangle+2\dfrac{w^2}{u}+w_t\\
&\quad\quad +\dfrac{w}{u}(Ae^u+Be^{-u}+D)-(Ae^u-Be^{-u})\Big\langle \dfrac{\nabla u}{u^{1/2}}, \nabla u^{1/2}\Big \rangle\\
\geqslant & -2[K(n-1)+H]w-\dfrac{2}{u^{1/2}}\langle\nabla w,\nabla u^{1/2}\rangle+2\dfrac{w^2}{u}+w_t.
\end{align*}
The proof is complete.
\end{proof}

Note that if we still use $w=|\nabla\log(1-h)|^2$, then a similar estimate from below for $\Delta_fw-w_t$ is also available. Such an estimate also leads us to gradient estimates for bounded solutions of \eqref{eqMainPDE-3}. However, we hardly obtain Liouville-type results for bounded solutions of \eqref{eqMainPDE-2} from these gradient estimates. This forces us to obtain suitable and new gradient estimates.

\subsection{Proof of Theorem \ref{thmGE-II}}

With each fixed time $\tau\in (t_0-T,t_0]$, we choose a cut-off function $\bar\psi(r,t)$ satisfying all conditions in Lemma \ref{cutoff}. To conclude the theorem, we will prove that the inequality \eqref{eqGE-II} holds at every point $(x,t)$ in $Q_{R/2,T}$. To this purpose, we first transform the cut-off function $\overline\psi$ to a new cut-off function attached with $M$. Indeed, let us define the function $\psi:M\times[t_0-T,t_0]\to \mathbb{R}$ given by
$$
\psi (x,t) =\bar\psi(d(x,x_0),t)
$$
where $x_0\in M$ is a fixed point given in the statement of the theorem. Let $(x_1,t_1)$ be a maximum point of $\psi w$ in the close set 
$$\{(x, t)\in M\times[t_0-T, \tau]: d(x, x_0)\leqslant R\}.$$
We may assume that $(\psi w)(x_1, t_1)>0$; otherwise, it follows from $(\psi w)(x_1, t_1)\leqslant 0$ that $(\psi w)(x, \tau)\leqslant 0$ for all $x\in M$ such that $d(x, x_0)\leqslant R$. However, by the definition of $\psi$, we have $\psi(x, \tau)\equiv 1$ for all $x\in M$ satisfying $d(x, x_0)\leqslant R/2$. This implies that $w(x,\tau)\leqslant 0$ when $d(x, x_0)\leqslant R/2$. Since $\tau$ is arbitrary, we conclude that \eqref{eqGE-II} holds on $Q_{R/2, T}$. Note that, according to the standard argument of Calabi \cite{Calabi}, we may also assume that $(\psi w)$ is smooth at $(x_1, t_1)$. 

Obviously at $(x_1,t_1)$, we have the following facts: $\nabla(\psi w)=0$, $\Delta_f(\psi w)\leqslant 0$, and $(\psi w)_t\geqslant 0$. Hence, still being at $(x_1, t_1)$, we get
\[
\begin{split}
0 \geqslant \Delta_f(\psi w)-(\psi w)_t=\psi(\Delta_fw-w_t)+w(\Delta_f\psi-\psi_t)+2\langle\nabla w,\nabla\psi\rangle
\end{split}
\]
Making use of \eqref{Eqmain4}, we further obtain
\[
\begin{split}
0\geqslant &-2[(n-1)K+H](\psi w)+2\dfrac{w^2}{u}\psi\\
&+\dfrac{2}{u^{1/2}}\big\langle \nabla\psi,\nabla u^{1/2}\big\rangle w +w\Delta_f\psi-w\psi_t-2\dfrac{|\nabla \psi|^2}{\psi}w
\end{split}
\]
at $(x_1,t_1)$. In other words, we have just proved that at $(x_1, t_1)$,
\begin{equation}\label{2d4}
\begin{split}
2P\psi w^2\leqslant & 2[(n-1)K+H]\psi w-\dfrac{2}{u^{1/2}}\big\langle\nabla \psi,\nabla u^{1/2}\big\rangle w \\
&-w\Delta_f\psi+w\psi_t+2\dfrac{|\nabla\psi|^2}{\psi}w
\end{split}
\end{equation}
where 
\[P=\min\limits_{Q_{R, T}}\dfrac{1}{u}=\frac{1}{C}.\]
We have two possible cases.

\medskip\noindent\textbf{Case 1}. If $x_1\in B(x_0,R/2)$, then for each fixed $\tau \in (t_0-T, t_0]$, there holds $\psi (\cdot, \tau) \equiv 1$ everywhere on the spacelike in $B(x_0,R/2)$ by the definition of $\psi$. By \eqref{2d4}, we yield
$$
2P\psi w^2\leqslant 2P\psi(1-h)w^2\leqslant 2[(n-1)K+H]\psi w+w\psi_t
$$ 
at $(x_1,t_1)$. For arbitrary $x \in B(x_0, R/2)$, we observe that
\begin{align*} 
Pw(x, \tau) &=P\psi w(x, \tau)\leqslant P(\psi^{1/2}w)(x_1,t_1) \\
&\leqslant [(n-1)K+H]\psi^{1/2}\big|_{(x_1,t_1)}+\dfrac{\psi_t}{2\psi^{1/2}}\Big|_{(x_1,t_1)}\\ 
&\leqslant [(n-1)K+H]+ c (\tau-t_0 +T)^{-1},
\end{align*}
thanks to Lemma \ref{cutoff}(ii). Since $\tau$ can be arbitrarily chosen, we complete the proof of \eqref{eqGE-II} in this case.

\medskip\noindent\textbf{Case 2}. Suppose that $x_1 \notin B(x_0,R/2)$ where $R\geqslant 2$. From now on, we use $c$ to denote a constant depending only on $n$ whose value may change from line to line. Since $\Ric_f\geqslant -(n-1)K$ and $r(x_1,x_0)\geqslant 1$ in $B(x_0,R)$, we can apply the $f$-Laplacian comparison theorem in \cite{Bri} to get
\begin{equation}
\Delta_fr(x_1)\leqslant \alpha +(n-1)K(R-1), 
\end{equation}
where $\alpha:=\max_{x\in B(x_0,1)}\Delta_fr(x)$. Recall that as in the proof of \eqref{25}, this $f$-Laplacian comparison theorem and Lemma \ref{cutoff} implies
\begin{equation}\label{2d5}
\begin{split}
-w\Delta_f\psi
\leqslant &\dfrac{P}{8}\psi w^2+\frac{c}{P}\Big(\dfrac{1}{R^4}+\dfrac{|\alpha|^2}{R^2}+K^2\Big).
\end{split}
\end{equation}
Now, we want to estimate the second term in the right hand side of \eqref{2d4}. By the Young inequality, we have
\begin{equation}\label{2d6}
\begin{split}
-\dfrac{2}{u^{1/2}}\big\langle\nabla\psi,\nabla u^{1/2}\big\rangle w 
\leqslant  &2\big[P\psi w^2\big]^{3/4}\dfrac{|\nabla\psi|}{u^{1/2}(P\psi)^{3/4}} \\
\leqslant &P\psi w^2+\dfrac{c}{P^3}\dfrac{|\nabla\psi|^4}{\psi^3}\\
\leqslant &P\psi w^2+\dfrac{c}{P^3R^4}
\end{split}
\end{equation}
Here we used $1\leqslant u$ in the second inequality. To estimate the rest of the right hand side of \eqref{2d4}, we use the Cauchy--Schwarz inequality several times. First for $\psi w$, we have
\[
2[(n-1)K+H]\psi w\leqslant \dfrac{P}{8}\psi w^2+ \dfrac{c}{P}(K^2+H^2). 
\]
then for $w\psi_t$, we obtain
\[\begin{split}
w\psi_t=\psi^{1/2}w\dfrac{\psi_t}{\psi^{1/2}}&\leqslant \dfrac{P}{8}\psi w^2+\dfrac{c}{P}\Big(\dfrac{\psi_t}{\psi^{1/2}}\Big)^2 \leqslant \dfrac{P}{8}\psi w^2+\dfrac{c}{P}\dfrac{1}{(\tau -t_0+T)^2}. 
\end{split}\]
and finally for $|\nabla\psi|^2 w/\psi$, we yield
\[\begin{split}
2\dfrac{|\nabla\psi|^2}{\psi}w
=2\psi^{1/2}w\dfrac{|\nabla\psi|^2}{\psi^{3/2}}&\leqslant \dfrac{P}{8}\psi w^2+\dfrac{c}{P}\Big(\dfrac{|\nabla\psi|^2}{\psi^{3/2}}\Big)^2 \leqslant \dfrac{P}{8}\psi w^2+\dfrac{c}{P}\frac{1}{R^4}. 
\end{split}\]
Combining \eqref{2d4}--\eqref{2d6} and all above three estimates, we conclude that
\[\begin{split}
2 P\psi w^2\leqslant & P\psi w^2+\dfrac{c}{P} \left\{
\begin{split}
&\dfrac{1}{R^4}+\frac{1}{P^2R^4}+K^2+H^2 \\
&+\dfrac{\alpha^2}{R^2}+\dfrac{1}{(\tau -t_0+T)^2}
\end{split}
\right\} +\dfrac{P}{2}\psi w^2.
\end{split}\]
Hence, we get 
\[
\psi w^2\leqslant \dfrac{c}{P^2}\Big(\frac{1}{R^4}+\dfrac{1}{P^2R^4}+K^2+H^2+\dfrac{\alpha^2}{R^2}+\dfrac{1}{(\tau -t_0+T)^2}\Big).
\]
Keep in mind that $\psi(\cdot, \tau) \equiv 1$ everywhere in $B(x_0,R/2)$, we infer
\[
w^2(x, \tau)\leqslant \psi w^2(x_1,t_1)\leqslant cC^2\Big(\dfrac{1}{R^4}+\dfrac{C^2}{R^4}+\frac{\alpha^2}{R^2}+\frac{1}{(\tau -t_0+T)^2}+K^2+H^2\Big).
\]
for all $x\in B(x_0, R/2)$. Since $\tau$ is arbitrary, this also completes the proof of \eqref{eqGE-II} in this case.

\subsection{Applications and remarks} 

In this subsection, we make use of Theorem \ref{thmGE-II} to obtain Liouville-type results for the Einstein-scalar field Lichnerowicz equation \eqref{eqMainPDE-2}.

\begin{corollary}[Liouville-type result]\label{cor42}
Let $(M,g,e^{-f}dv)$ be an $n$-dimensional complete smooth metric measure space with $\Ric_f\geqslant 0$. Suppose that $u$ is a smooth solution to the Lichnerowicz equation \eqref{eqMainPDE-2}, namely
\[
\Delta_f u+Ae^{2u}+Be^{-2u}+D=0,
\]
where $A, B, D$ are constants satisfying $A\leqslant 0, B\geqslant 0$ and $D\geqslant 0$. If $u$ is bounded, then $u$ is constant.
\end{corollary}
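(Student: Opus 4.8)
The plan is to deduce Corollary~\ref{cor42} directly from the gradient estimate \eqref{eqGE-II} of Theorem~\ref{thmGE-II}, exactly as Corollaries~\ref{cor1} and \ref{cor2} follow from Theorem~\ref{thmGE-I}. The first step is to reduce the given Lichnerowicz equation $\Delta_f u + Ae^{2u} + Be^{-2u} + D = 0$ to the normalized form \eqref{eqMainPDE-3} treated in Theorem~\ref{thmGE-II}. Since $u$ is bounded, there is a constant $C_0 > 0$ with $-C_0 < u < C_0$ on $M$; setting $v = 2u + 2C_0 + 1$ we obtain, as explained in the introduction, that $v$ solves the elliptic equation $\Delta_f v + \widetilde{A} e^{v} + \widetilde{B} e^{-v} + \widetilde{D} = 0$ with $\widetilde{A} = 2Ae^{-2C_0-1} \leqslant 0$, $\widetilde{B} = 2Be^{2C_0+1} \geqslant 0$, $\widetilde{D} = 2D \geqslant 0$, and $1 < v < 4C_0 + 1 =: C$. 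Regarding $v$ as a time-independent solution of the parabolic equation \eqref{eqMainPDE-3} on $Q_{R,T}$ for every $R, T > 0$, it fits the hypotheses of Theorem~\ref{thmGE-II}.

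Next I would check that the curvature term $H_2$ appearing in \eqref{eqGE-II} vanishes under the sign conditions. With $\widetilde{A} \leqslant 0$, $\widetilde{B} \geqslant 0$, $\widetilde{D} \geqslant 0$, and $v > 1$, each of the three terms
\[
\widetilde{A} e^{v}\frac{2v-1}{2v}, \qquad -\widetilde{B} e^{-v}\frac{2v+1}{2v}, \qquad -\frac{\widetilde{D}}{2v}
\]
is non-positive, since $(2v-1)/(2v) > 0$ and $(2v+1)/(2v) > 0$ for $v > 1$; hence their sum is $\leqslant 0$ and therefore $H_2 = 0$. Likewise $\Ric_f \geqslant 0$ gives $K = 0$ in the estimate.

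Then I would apply \eqref{eqGE-II} to $v$: since $v$ does not depend on $t$, letting $T \to \infty$ (equivalently $t - t_0 + T \to \infty$) kills the term $1/\sqrt{t-t_0+T}$, and then letting $R \to \infty$ kills the term $(1 + \sqrt{|\alpha| R} + \sqrt{C})/R$. What remains is
\[
\frac{|\nabla v|}{\sqrt{v}} \leqslant c(n)\sqrt{C}\,\bigl(0 + 0 + \sqrt{K} + \sqrt{H_2}\bigr) = 0
\]
on all of $M$, so $\nabla v \equiv 0$ and $v$, hence $u$, is constant.

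The only genuinely nontrivial point — and thus the "main obstacle," such as it is — is the verification that $H_2 = 0$, i.e. making sure the algebraic signs of $(2v\pm 1)/(2v)$ combine with the sign hypotheses on $A$, $B$, $D$ in the right direction after the affine change of variables; everything else is a direct citation of Theorem~\ref{thmGE-II} together with the limiting argument already used in Corollaries~\ref{cor1}--\ref{cor2}. One should note in particular that the normalization $v > 1$ (rather than merely $v > 0$) is what guarantees $2v - 1 > 0$, which is exactly why the affine shift includes the extra $+1$.
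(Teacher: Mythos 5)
Your proposal is correct and follows essentially the same path as the paper's proof: reduce to \eqref{eqMainPDE-3} via the affine change of variable $v = 2u + 2C_0 + 1$, observe that the sign conditions together with $v>1$ force $H_2=0$ (and $\Ric_f\geqslant 0$ gives $K=0$), and then let $t\to\infty$ and $R\to\infty$ in \eqref{eqGE-II} to conclude $\nabla v\equiv 0$. Your explicit term-by-term verification that $H_2 = 0$, and the remark that the shift by $+1$ is what makes $2v-1>0$, spell out a step the paper states more tersely, but the argument is identical.
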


\begin{proof}
Let $u$ solve \eqref{eqMainPDE-2} and suppose that $-C < u < C$ for some $C > 0$. By a scaling argument, the function $v=2u+2C+1$ solves \eqref{eqMainPDE-3}, namely
\[
\Delta_fv+Ae^{-v}+Be^{-v}+D=0
\]
and $1 < v < 4C+1$ everywhere. Here $A, B, D$ are constants which are different from those in Corollary \ref{cor42}, but we still have that $A\leqslant 0, B\geqslant 0, D\geqslant 0$. By Theorem \ref{thmGE-II}, we obtain the following gradient estimate
\[\frac{|\nabla v|}{\sqrt[]{v}}\leqslant c(n)\ \sqrt[]{4C+1}\Big(\frac{1+\sqrt{|\alpha|R}+\sqrt[]{4C+1}}{R}+\frac{1}{\sqrt{t-t_0+T}}+\sqrt{H_2}\Big),
\]
where
\[
H_2=\max\limits_{M\times (0, \infty)}\left\{Ae^v\frac{2v-1}{2v}-Be^{-v}\frac{2v+1}{2v}-\frac{D}{2u}, 0 \right\}=0. 
\]
Here we have used $A\leqslant 0, B\geqslant 0, D\geqslant 0$. Since $v$ does not depend on $t$, first let $t$ tend to infinity then let $R$ approach to infinity to conclude that $|\nabla v| = 0$. Therefore, $v$ is constant, so is $u$. The proof is complete.
\end{proof}

Using the same argument as in the proof of Theorem \ref{thmGEUnderRicci-I}, we can derive gradient estimates for positive solutions of the Einstein-scalar field Lichnerowicz equation under the Ricci flow. Moreover, repeating the proof of Corollary \ref{Harnack}, one can obtain Harnack-type inequality for bounded solution of \eqref{eqMainPDE-2}. We leave them as exercises for interested readers.


\section{Gradient estimates on manifolds with boundary: Proof of Theorems \ref{thmGEWBoundary-I} and \ref{thmGEWBoundary-II}}
\label{sec-GE-bdry}

In this section, we assume that $M$ is an $n$-dimensional complete compact manifold with non-empty boundary $\partial M$. Let $\nu$ be the outward pointing unit normal vector to $\partial M$ and let $II$ stands for the second fundamental form of $\partial M$ with respect to $\nu$. In the paper \cite{LY}, Li--Yau proved that if $M$ is a compact Riemannian manifolds with non-negative Ricci curvature and the boundary $\partial M$ is convex in the sense that $II\geqslant 0$, then any non-negative solution $u$ of the heat equation $\Delta u-\partial_tu=0$ on $M\times(0, +\infty)$ with Neumann boundary condition $\partial_\nu u=0$ satisfies
$$\frac{|\nabla u|^2}{u^2}-\frac{u_t}{u}\leqslant \frac{n}{2t}$$
on $M\times(0, +\infty)$. 

Later, Li-Yau's gradient estimates were generalized to manifolds with non-convex boundary by Chen \cite{Chen} and Wang \cite{Wang}. In \cite{Bai}, the author obtained gradient estimates when the underlying manifold is compact with non-convex boundary evolving under the Ricci flow. We would like to point out that there are some technique complication due to the non-convexity of the boundary since estimates necessarily involve the second fundamental form of $\partial M$ and a so-called interior rolling ball condition that we are going to explain. The interior rolling ball condition is a geometric condition on the boundary $\partial M$ to ensure that the first Neumann eigenvalue is bounded away from zero, see \cite{Chen}, and that the second fundamental form is bounded from above, see \cite{ChenSung}. For clarity and convenience, let us recall its definition.

\begin{definition}\label{def-InteriorRollingRBall}
Let $\partial M$ be the boundary of a compact Riemannian manifold $M$. Then $\partial M$ satisfies the interior rolling $R$-ball condition if for each point $p\in \partial M$ there is a geodesic ball $B_q(R/2)$, centered at some $q\in M$ with radius $R/2$, such that $\{p\}=B_q(R/2)\cap\partial M$ and $B_q(R/2)\subset M$.
\end{definition}


Now, we are going to prove Theorem \ref{thmGEWBoundary-I}. Our proof mainly follows arguments used in \cite{Chen}, in \cite{Bri}, and in \cite{Wang}.

\begin{proof}[Proof of Theorem \ref{thmGEWBoundary-I}]To overcome the non-convexity of the boundary, we need to have the auxiliary cut-off function which was introduced in \cite{Chen, Wang}. Let $\psi: [0,+\infty)\to \mathbb{R}$ be a non-negative $C^2$-function such that $\psi(r)\leqslant H$ if $r\in[0,1/2)$, $\psi(r)=H$ if $r\in[1,+\infty)$, $\psi(0)=0$, $0\leqslant \psi'(r)\leqslant 2 H$, $\psi'(0)=H$, and $\psi''\geqslant -H$. Then, we define
\[
\phi(x)=\psi\Big(\frac{r(x)}{R}\Big),
\]
where $r(x)$ denotes the distance from $x$ to $\partial M$. Finally, we denote
\[
\chi(x)=(1+\phi(x))^2.
\]
As always, we suppose that $0< u\leqslant 1$ is a bounded solution of \eqref{neuman}. We let $h=\log u$ and $w=|\nabla(1-h)|^2$. For each fixed $T<+\infty$, on the compact set $\overline{M}\times[0, T]$, we define
\[
F(x, t)=t\chi (x) w(x) =t\chi (x) \frac{|\nabla h|(x)^2}{(1-h(x))^2}.
\]
Since $F(x,t)$ is a continuous function on the close set $\overline{M}\times [0, T]$, there exists some $(p, t_0)\in \overline{M}\times[0,T]$ such that $F$ achieves its maximum value at $(p, t_0)$. If $F(p, t_0)= 0$, then $F(x, t)\equiv0$ on $\overline{M}\times [0, T]$. Consequently, this implies that the right hand side of \eqref{eqGE-bdry-I} vanishes everywhere in $M\times(0, T]$. Since $T$ is arbitrary, the conclusion in Theorem \ref{thmGEWBoundary-I} follows. Hence, it suffices to consider the case $F(p, t_0)>0$ and $t_0>0$.

In the next stage of the proof, we claim that $p\in\overline{M}\setminus\partial M$. Indeed, by way of contradiction, we suppose that $p\in\partial M$. Then $\partial_\nu F (p,t_0)\geqslant 0$. Let $\{e_1,e_2,\cdots,e_n\}$ be an orthonormal frame at $p$ with a convention that $e_n=\nu$. Clearly,
\[
0\leqslant \frac{\partial F}{\partial \nu}(p,t_0)=t_0\Big( \frac{\partial\chi}{\partial\nu}(p)\frac{|\nabla h|^2}{(1-h)^2}+\chi(p)\frac{2\langle \nabla h,(\nabla h)_\nu \rangle}{(1-h)^2}+\chi(p)\frac{2|\nabla h|^2h_\nu}{(1-h)^3}\Big).
\]
Since $h_\nu=h_n= \partial_\nu u / u=0$ on $\partial M$ and $t_0>0$, on one hand we conclude that 
$$
0\leqslant \frac{\partial\chi}{\partial\nu}(p)\frac{1}{\chi(p)}+\frac{2\langle\nabla h,(\nabla h)_\nu\rangle}{|\nabla h|^2}=\frac{\partial\chi}{\partial\nu}(p)\frac{1}{\chi(p)}+ 2 |\nabla h|^{-2}\sum\limits_{\alpha=1}^{n-1}h_\alpha h_{\alpha\nu}.
$$
Here we used $|\nabla h|(p, t_0)\not=0$ since $F(p, t_0)>0$. However, on the other hand, in terms of the second fundamental form $II=(II_{\alpha\beta})$, we easily obtain
$$h_{\alpha\nu}=-\sum\limits_{\beta=1}^{n-1}II_{\alpha\beta}h_{\beta};$$
see \cite{LY, Chen, Bai}. Therefore, 
$$2\langle\nabla h,(\nabla h)_\nu\rangle=-2II(\nabla h,\nabla h)\leqslant 2H|\nabla h|^2.$$ 
Thus, if we choose $R<1$ then
\[
\frac{\partial\chi}{\partial\nu}(p)\frac{1}{\chi(p)}+\frac{2\langle\nabla h,(\nabla h)_\nu\rangle}{|\nabla h|^2}\leqslant -\frac{2H}{R}+2H<0,
\]
which gives us a contradiction. Thus, $p\in \overline{M}\setminus\partial M$ as claimed. 

Since $F$ obtains its maximum value at $(p, t_0)$, we have the following facts: $\nabla F=0$, $\partial_t F \geqslant 0$, $\Delta F\leqslant 0$ at $(p,t_0)$. Since $u$ satisfies $\eqref{neuman}$ and by Lemma \ref{lemmamain}, the following inequality
$$\Delta w-w_t\geqslant -2C_1w+2(1-h)w^2+\frac{2h}{1-h}\left\langle\nabla w, \nabla h\right\rangle$$
holds on $\overline{M}\times[0, T]$ for any fixed $T>0$. Therefore,
\[
\begin{split}
0\geqslant &\Delta F-F_t=t_0\chi(\Delta w-w_t)+t_0w\Delta\chi+2t_0\langle\nabla\chi,\nabla w\rangle-\chi w\\
\geqslant & t_0\chi \Big( -2C_1w+2(1-h)w^2+\frac{2h}{1-h}\langle \nabla w,\nabla h\rangle\Big) \\
&+t_0w\Delta\chi+2t_0\langle\nabla\chi,\nabla w\rangle-\chi w.
\end{split}
\]
We observe from $0=\nabla F=t_0\chi\nabla w+t_0w\nabla\chi$ that
\[
\langle\nabla\chi,\nabla w\rangle=-\frac{|\nabla\chi|^2}{\chi}w\geqslant -16\frac{H^2}{R^2}w,
\]
and that
\[
\langle \nabla w,\nabla h \rangle=-\Big\langle \frac{\nabla\chi}{\chi},\nabla h\Big\rangle w.
\]
This implies
\begin{equation}\label{qad}
\begin{split}
0\geqslant &t_0 \chi \Big( -2C_1w+2(1-h)w^2-\frac{2h}{1-h}\Big\langle \frac{\nabla\chi}{\chi},\nabla h\Big\rangle w\Big) \\
& +t_0w\Delta\chi-32t_0w\frac{H^2}{R^2}-\chi w.
\end{split}
\end{equation}
Denote
\[
\partial M(R)=\{x\in M|r(x)\leqslant R\}.
\] 
By using an index comparison theorem in \cite{Warner}, see also \cite{Kasue, FYWang, Wang}, we can estimate $\Delta r$ from below as follows 
$$
\Delta r\geqslant -(n-1)(3H+1)
$$
for $x\in \partial M(R)$. Therefore,
$$
\Delta\phi=\frac{1}{R}\psi'\Delta r+\frac{1}{R^2}\psi''|\nabla r|^2\geqslant -\frac{2(n-1)H(3H+1)}{R}-\frac{H}{R^2}.
$$
Now the lower bound of $\Delta \phi$ implies that
$$
\Delta \chi=2(1+\phi)\Delta\phi+2|\nabla\phi|^2\geqslant 2(1+H)\Big(-\frac{2(n-1)H(3H+1)}{R}-\frac{H}{R^2}\Big).
$$
Plugging this inequality into \eqref{qad}, we obtain 
\begin{equation}\label{dkh}
\begin{split}
0\geqslant &-2t_0w
\left\{
\begin{split}
& (1+H)^2C_1 +\frac{2(1+H)(n-1)H(3H+1)}{R}  \\
&+\frac{16H^2+(1+H)H}{R^2}+\frac{\chi}{2t_0}
\end{split}
\right\}\\
&+2t_0(1-h)w^2-\frac{2ht_0}{1-h}\left\langle \nabla\chi,\nabla h\right\rangle w \\
=&-2\Big(E+\frac{\chi}{2t_0}\Big)t_0w
+2t_0(1-h)w^2-\frac{2ht_0}{1-h}\left\langle \nabla\chi,\nabla h\right\rangle w,
\end{split}
\end{equation}
where
\[
E:=(1+H)^2C_1 + 2(1+H)(n-1)H(3H+1)/R+\big(16H^2+(1+H)H \big) R^{-2}.
\]
Now, we want to estimate both the first term and the third term in the right hand side of \eqref{dkh}. To control the first term, we use the Cauchy--Schwarz inequality to obtain
$$
-2t_0w\Big(E+\frac{\chi}{2t_0}\Big)\geqslant -\frac{1}{2}t_0w^2-\Big(4t_0E^2+\frac{(1+H)^4}{t_0}\Big).
$$
By the Young inequality, we estimate the third term by
\begin{align*}
\frac{2ht_0}{1-h}\left\langle\nabla\chi,\nabla h\right\rangle w\leqslant &2t_0|h||\nabla\chi|w^{3/2}  = 2\big[t_0(1-h)w^2\big]^{3/4}\frac{t_0^{1/4}|h||\nabla\chi|}{\big(1-h\big)^{3/4}} \\
\leqslant &t_0(1-h)w^2+\frac{27}{16}\frac{t_0(h|\nabla\chi|)^4}{\big(1-h\big)^3} \\
\leqslant &t_0(1-h)w^2+432\frac{h^4}{(1-h)^3}\frac{t_0(1+H)^4H^4}{R^4}.
\end{align*}
Plugging the above two inequalities into \eqref{dkh}, we know that 
$$
t_0(1-h)w^2 \leqslant \frac{1}{2}t_0w^2+4t_0E^2+\frac{(1+H)^4}{t_0}+432\frac{h^4}{(1-h)^3}\frac{t_0(1+H)^4H^4}{R^4}
$$
at $(p, t_0)$. Since $h/(1-h) \leqslant 1$ and $1/(1-h) \leqslant 1$, we have
$$
t_0w^2\leqslant \frac{1}{2}t_0w^2+4t_0E^2
+\frac{(1+H)^4}{t_0}+432t_0\frac{(1+H)^4H^4}{R^4}.
$$
Consequently, we can estimate $w^2$ from above as follows
\begin{align*}
w^2\leqslant &24(1+H)^4C_1^2+\frac{24[2(1+H)(n-1)H(3H+1)]^2}{R^2}\\
&2\frac{(1+H)^4}{t_0^2}+\frac{24[16H^2+(1+H)H]^2+864(1+H)^4H^4}{R^4}\\
=&24(1+H)^4C_1^2+2\frac{(1+H)^4}{t_0^2}+\frac{C_2^2}{R^2}+\frac{C_3}{R^4}.
\end{align*}
Therefore, for any $x\in \overline{M}$, we get
\begin{align*}
Tw(x, T)&\leqslant T(1+\phi(x))^2w(x, T)\\
&\leqslant t_0(1+\phi(p))^2w(p, t_0)\\
&\leqslant (1+H)^2\Big[\sqrt{24}(1+H)^2t_0C_1+\sqrt{2}(1+H)^2+t_0\Big(\frac{C_2}{R}+\frac{\sqrt{C_3}}{R^2}\Big)\Big]\\
&\leqslant (1+H)^2\Big[\sqrt{24}(1+H)^2TC_1+\sqrt{2}(1+H)^2+T\Big(\frac{C_2}{R}+\frac{\sqrt{C_3}}{R^2}\Big)\Big].
\end{align*}
From this we deduce that 
\[
w(x, T)\leqslant (1+H)^2\left (\sqrt{24}(1+H)^2C_1+\sqrt{2}\frac{(1+H)^2}{T}+\frac{C_2}{R}+\frac{\sqrt{C_3}}{R^2}\right ).
\]
Since $T$ is arbitrary, the proof is complete.
\end{proof}

\begin{remark}\label{rem}
In our estimate above, the condition that $R$ is ``small" is understood in the following sense: $R$ is chosen to be a positive constant less than $1$. Moreover, $R$ is dependent on the upper bound of the sectional curvature of the manifold near the boundary. The upper bound of $R$ is explicitly determined by
$$
\sqrt{K_R}\tan{\big (R\sqrt{K_R}\big )}\leqslant \frac{H}{2}+\frac{1}{2}
$$
and
$$
\frac{H}{\sqrt{K_R}}\tan{R\sqrt{K_R}}\leqslant \frac{1}{2},
$$
where $K_R$ is the upper bound of the sectional curvature on the set $\partial M(R)$; see \cite{Chen}.
\end{remark}

Let us now consider one special case when $\partial M$ is convex, namely, $H \equiv 0$ identically. Using Theorem \ref{thmGEWBoundary-I}, we obtain the following corollary. 

\begin{corollary}\label{main5}
Let $M$ be a compact Riemannian manifold with convex boundary $\partial M$. Suppose that $\partial M$ satisfies the interior rolling $R$-ball condition. Let $K$ be non-negative constant such that the Ricci curvature $\Ric_M$ of $M$ is bounded from below by $-K$. By choosing $R$ ``small", any positive solution $u(x,t)$ of the following equation
\begin{equation} \label{neumann}
\left\{
\begin{split}
u_t &=\Delta u+au\log u+bu+Au^p+Bu^q,\\
\frac{\partial u}{\partial \nu}\Big|_{\partial M} &=0,
\end{split}
\right.
\end{equation}
where $a+b\leqslant 0, A\leqslant 0, B\geqslant 0, p\geqslant 1, q\geqslant 0$ are constants, if satisfies $u \leqslant 1$, then 
\begin{equation} \label{neu}
\frac{|\nabla u|}{u}\leqslant \big(\sqrt[4]{24}\sqrt{K}+ \frac{\sqrt[4]{2}}{\sqrt{t}}\big)(1-\log u)
\end{equation}
on $M\times(0, +\infty)$. 
\end{corollary}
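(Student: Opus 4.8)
The plan is to derive Corollary~\ref{main5} as a direct specialization of Theorem~\ref{thmGEWBoundary-I} to the convex case. Since $\partial M$ is convex, its second fundamental form satisfies $II\geqslant 0$, so in the hypotheses of Theorem~\ref{thmGEWBoundary-I} we may take the lower bound constant $H=0$; in particular $1+H=1$. The interior rolling $R$-ball condition is assumed in the statement, and the smallness requirement on $R$ of Remark~\ref{rem} reduces, when $H=0$, to the single inequality $\sqrt{K_R}\tan\!\big(R\sqrt{K_R}\big)\leqslant 1/2$ (the second condition becoming $0\leqslant 1/2$), which holds for all sufficiently small $R$. Hence Theorem~\ref{thmGEWBoundary-I} applies to any $u\leqslant 1$ solving \eqref{neumann}.

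The remaining step is routine bookkeeping of the constants $C_1,C_2,C_3$ entering \eqref{eqGE-bdry-I}. Under the sign assumptions $a+b\leqslant 0$, $A\leqslant 0$, $B\geqslant 0$, $p\geqslant 1$, $q\geqslant 0$, one checks that $\max\{a+b,0\}=0$, that $\sup\{Ap,A(p-1)u^{-1},Apu^{-1},0\}=0$ (each of $Ap$, $A(p-1)u^{-1}$, $Apu^{-1}$ being $\leqslant 0$), and that $\sup\{(-q-1)Bu^{-q-1},0\}=0$, so $C_1=K$. Setting $H=0$ makes $C_2=0$ and $C_3=0$ identically. Substituting $H=0$, $C_1=K$, $C_2=C_3=0$ into \eqref{eqGE-bdry-I} collapses the right-hand side to
\[
\frac{|\nabla u|}{u}\leqslant \Big(\sqrt[4]{24}\sqrt{K}+\frac{\sqrt[4]{2}}{\sqrt{t}}\Big)(1-\log u)
\]
on $M\times(0,+\infty)$, which is precisely \eqref{neu}.

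Alternatively, one may simply re-run the proof of Theorem~\ref{thmGEWBoundary-I} with $H=0$, where it simplifies considerably: the auxiliary profile $\psi$ is forced to vanish identically (since $0\leqslant\psi'\leqslant 2H=0$ and $\psi(0)=0$), hence $\phi\equiv 0$ and $\chi\equiv 1$, so $F(x,t)=t\,w(x)$ and no geometric cut-off near $\partial M$ is required. The only place the boundary enters is in ruling out an interior maximum on $\partial M$: there $\partial_\nu F\geqslant 0$ together with the Neumann condition $h_\nu=0$ gives $2\langle\nabla h,(\nabla h)_\nu\rangle=-2\,II(\nabla h,\nabla h)\leqslant 0$, already contradicting $\partial_\nu F\geqslant 0$ at a point where $|\nabla h|\neq 0$, so convexity alone suffices here and one need not even invoke the smallness of $R$ for this step; the subsequent interior argument is identical to that of Theorem~\ref{thmGEWBoundary-I} with all $H$-terms deleted. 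Since this is a pure specialization there is no real obstacle --- the only points needing care are the sign accounting that forces every supremum inside $C_1$ down to $0$, and the harmless verification that the ``$R$ small'' conditions of Remark~\ref{rem} remain consistent when $H=0$.
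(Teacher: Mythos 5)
Your main argument is correct and is exactly the paper's (implicit) approach: the paper introduces the corollary with only ``Let us now consider the special case when $\partial M$ is convex, namely $H\equiv 0$ identically. Using Theorem \ref{thmGEWBoundary-I}, we obtain\dots'', and your bookkeeping of the constants ($C_1=K$, $C_2=C_3=0$, $1+H=1$) is right; you even tacitly correct the paper's typo $Bu^{q}$ to $Bu^{-q}$ in \eqref{neumann}, which is clearly what is meant so that Theorem \ref{thmGEWBoundary-I} applies.

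One caveat about your ``alternative'' re-run of the proof with $H=0$: the claimed contradiction at the boundary does not actually hold in the merely convex ($II\geqslant 0$ but not strictly positive) case. With $\chi\equiv 1$ you get $\partial_\nu F = -2t_0\,II(\nabla h,\nabla h)/(1-h)^2 \leqslant 0$, which is \emph{consistent} with $\partial_\nu F \geqslant 0$ precisely when $II(\nabla h,\nabla h)=0$ at the max point; there is no contradiction. This is the same degeneration that makes the strict inequality $-2H/R+2H<0$ fail at $H=0$ in the paper's own proof of Theorem \ref{thmGEWBoundary-I}. The clean way to justify the corollary for a non-strictly-convex boundary is the route you took first: apply Theorem \ref{thmGEWBoundary-I} with any $H>0$ (legitimate since $II\geqslant 0 \geqslant -H$) and let $H\to 0^+$ in the resulting estimate, which is continuous in $H$. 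The specialization argument is therefore the one to rely on; the alternative, as stated, has a gap in the degenerate boundary case.
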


It is worth mentioning that by using Corollary \ref{main5} we can obtain Liouville type theorems for Schr\"{o}dinger-type equations, Yamabe-type equations, as well as Lichnerowicz-type equations on compact manifolds with boundary as in Section \ref{sec-GE-I}. Due to the limit of length, we do not mention further and leave the details for interested readers. 

Note that if we use Lemma \ref{lemmaeqMainPDE-3} and repeat the argument in the proof of Theorem \ref{thmGEWBoundary-I}, then it easy to prove Theorem \ref{thmGEWBoundary-II}. Since the proof of Theorem \ref{thmGEWBoundary-II} is similar to that of Theorem \ref{thmGEWBoundary-I}, we omit the details and leave it to the reader.


\section*{Acknowlegement} 

This research is funded by the VNU University of Science under project number TN.16.01.

\end{document}